\newtheorem{thm}{Theorem}[section]
\newtheorem{lem}[thm]{Lemma}
\theoremstyle{remark}
\theoremstyle{definition}
\newtheorem{defn}[thm]{Definition}
\newtheorem{rmk}[thm]{Remark}
\newcommand{\emptyword}{\lambda}
\newcommand{\groupid}{1}
\newcommand{\equalword}{=}
\newcommand{\equalinf}{=_F}
\newcommand{\Nf}{\ensuremath{\mathcal{N}}}
\newcommand{\Nfgs}{{\mathcal{N}}}  
\def\nf#1{\mathsf{nf}(#1)}
\newcommand{\lbl}{{\mathsf{label}}}  
\newcommand {\lab} {{\mathsf{label}}}
\newcommand{\Graph}[1]{\mathsf{graph}\left({#1}\right)}
\newcommand{\ra}{\rightarrow}
\begin{document}

\title{Autostackability of Thompson's group~$F$}

\author[N.~Corwin]{Nathan Corwin}
\address{Department of Mathematics and Statistics, Villanova University, 
800 Lancaster Avenue, Villanova, PA 19085, USA}
\email{nathan.corwin@villanova.edu}

\author[G.~Golan]{Gili Golan}
\address{Department of Mathematics, Ben-Gurion University of the Negev, P.O.B.~653, Be$'$er Sheva 84105, Israel}
\email{golangi@bgu.ac.il}

\author[S.~Hermiller]{Susan Hermiller}
\address{Department of Mathematics, University of Nebraska, Lincoln, NE 68588-0130, USA}
\email{hermiller@unl.edu}

\author[A.~Johnson]{Ashley Johnson}
\address{Department of Mathematics, University of North Alabama, Florence, AL 35632, USA}
\email{ajohnson18@una.edu}

\author[Z.~\v{S}uni\'c]{Zoran \v{S}uni\'c}
\address{Department of Mathematics, Hofstra University, Hempstead, NY 11549, USA}
\email{zoran.sunik@hofstra.edu}

\thanks{2010 {\em Mathematics Subject Classification}. 20F10; 20F65, 68Q42 \\
Key words:  Thompson's group; autostackable; rewriting system; 
automatic group; word problem; finite state automata}


\begin{abstract}
The word problem for Thompson's group $F$ has a solution, but 
it remains unknown whether $F$ is automatic or has a finite or regular convergent
(terminating and confluent) rewriting system.  We show that the group $F$
admits a natural extension of these two properties, namely autostackability,
and we give an explicit bounded regular convergent
prefix-rewriting system for $F$.
\end{abstract}

\maketitle


\section{Introduction}\label{sec:intro}


The group of orientation-preserving piecewise linear
automorphisms of the unit interval for which all linear
slopes are powers of 2, and all breakpoints lie in
the 2-adic numbers, is known as {\em Thompson's group $F$}. The group
$F$ has a well-known finite presentation
\[
F=\langle x,y \mid 
[y,xyx^{-2}], [y,x^{2}yx^{-3}] \rangle
\] 
(and the generators $x$ and $y$ 
are the elements $x_0$ and $x_1$, respectively, of
a standard infinite generating set).
The group $F$
has been the focus of considerable
research in recent years, because of its
connections to many other fields, 
because of long-standing open problems
such as the amenability of $F$, and
because of its many surprising properties
(for example, $F$ is an infinite dimensional
torsion-free group with homological type $FP_\infty$~\cite{browng}).


Many algorithmic problems have been shown to have
solutions for $F$.
The word problem is solvable for $F$;
moreover, Guba~\cite{guba}
shows that the Dehn function for $F$ is quadratic.
Guba and Sapir~\cite{gsdiag} 
prove that the conjugacy problem is solvable.
Kassabov and Matucci~\cite{km} show that
the simultaneous conjugacy problem is solvable,
and Burillo, Matucci, and Ventura~\cite{bmv}
show that the twisted conjugacy problem also has a solution.
Golan and Sapir~\cite{GoSa} give a solution to the
subgroup membership problem for a large family of subgroups of $F$.
In~\cite{bbh}, Bleak, Brough and Hermiller show that there is an
algorithm that, upon input of a finite set $S$ of elements
of $F$ (that is, words over the generating set 
$A:=\{x^{\pm 1},y^{\pm 1}\}$), can determine whether
or not the subgroup $\langle S \rangle$ generated by $S$ is 
solvable and, if it is, also determines its derived length.
In~\cite{golan}, Golan shows that there is an algorithm that, upon
input of the finite subset $S \subset F$, can determine whether
$S$ generates the entire group $F$.

On the other hand, many algorithmic questions also
remain open for Thompson's group $F$,
including  the questions of
whether $F$ is an automatic group (see~\cite{echlpt} for definitions
and background on automatic groups), and whether $F$ admits
a finite or regular convergent rewriting system 
(defined in~Section~\ref{subsec:regular});
see the problem list~\cite{thomp}.
Both of these are questions about the complexity of the
word problem for $F$; in particular, about whether the
word problem can be solved using a computer with a finite
amount of memory, known as a finite state automaton.

Both (prefix-closed) automatic structures and finite 
convergent rewriting systems are special cases of
convergent prefix-rewriting systems.  A {\em convergent
prefix-rewriting system}, or {\em CP-RS}, for a group $G$
consists of a finite set $A$ and a set 
$R \subset A^* \times A^*$ of ordered pairs of words
over $A$ such that $G$ is presented, as a monoid, by
\[
G = Mon \langle A \mid \{u=v \mid (u,v) \in R\} \rangle,
\]
and the rewritings $uz \ra vz$ for all $(u,v) \in R$ and $z \in A^*$
satisfy
\begin{itemize}
\item (termination) there is no infinite sequence of rewritings
$x_1 \ra x_2 \ra \cdots$, and  
\item (confluence) the set $Irr(R)$ 
of words that cannot be rewritten
is a set of (unique) normal forms for $G$.
\end{itemize}
A CP-RS is {\em regular} if the subset $R \subset A^* \times A^*$ 
is (synchronously) regular; that is, if the set of padded
words obtained from $R$ can be recognized by
a finite state automaton (see Section~\ref{subsec:regular} for
more information on regular sets, finite state automata, and padded words).  
Otto~\cite{otto} has shown that a group $G$ has a 
prefix-closed automatic structure if and only if
$G$ admits a regular CP-RS  $R$ satisfying the further
condition that whenever $(u,v) \in R$ then $v \in Irr(R)$
and $u \in Irr(R \setminus \{(u,v)\})$.
The group $G$ has a finite convergent rewriting system 
if and only if there is a finite set
$R' \subset A^* \times A^*$ such that
$\{(wu',wv') \mid w \in A^*,~(u',v') \in R'\}$ is a
CP-RS for $G$.  

In~\cite{BHH:algorithms}, Brittenham, Hermiller, and Holt
considered another solution to the word problem for groups
by finite state automata that is a natural extension of 
both of automaticity and finite rewriting systems, 
namely autostackability.  
A CP-RS $R$ is {\em bounded} if
there is a constant $K>0$ such that whenever $(u,v) \in R$, then
$(u,v)=(wu',wv')$ for some $w,u',v' \in A^*$ with $l(u')+l(v') \le K$.
A group $G$ is {\em autostackable} if $G$ has a bounded
regular convergent prefix-rewriting system.
Every group that has a prefix-closed (asynchronous or synchronous)
automatic structure, or has a finite convergent rewriting system,
is autostackable~\cite{BHH:algorithms}.

In analogy with the characterization of automatic groups by
a regular set of normal forms with a $K$-fellow traveler property,
autostackability of a group $G$ with a finite
inverse-closed generating set $A$
also has a topological characterization,
in terms of a discrete dynamical system on the Cayley graph
$\Gamma:=\Gamma_A(G)$ of $G$ over $A$, as follows.
A \emph{flow function} for $G$ with \emph{bound} $K \ge 0$,
with respect to
a spanning tree $T$ in $\Gamma$,
is a function $\Phi$ mapping the
set $\vec E$ of directed edges of $\Gamma$ to the
set $\vec P$ of directed paths in $\Gamma$, such that
\begin{itemize}
        \item (fixing the tree) for each $e \in \vec E$ the path
	$\Phi(e)$ has the same initial and terminal
	vertices as $e$ and length at most $K$, and
	$\Phi$ acts as the identity on edges lying
	in $T$, and
	\item (termination)  there is no infinite sequence
	$e_1,e_2,e_3,...$ of edges with each $e_i$
	not in $T$ and each $e_{i+1}$ in the path $\Phi(e_i)$.
\end{itemize}
Extending $\Phi$ to $\widehat \Phi:\vec P \to \vec P$ by $\widehat
\Phi(e_1 \cdots e_n):=\Phi(e_1) \cdots \Phi(e_n)$, 
then for all $p \in \vec P$, there
is a $n_p \in \mathbb N$ such that $\widehat \Phi^{n_p}(p)$ is a path in
the tree $T$; that is, when $\widehat \Phi$ is iterated, paths in
$\Gamma$ ``flow'' toward the tree. 
Let $\Nf_T$ denote the set of words
labeling non-backtracking paths in $T$ starting at the vertex
labeled by the identity 
of $G$,
and let $\lbl:\vec P \to
A^*$ be the function that returns the label of any directed path in
$\Gamma$. 
The flow function $\Phi$ is \emph{regular} if the
graph of $\Phi$, written as a subset of $A^* \times A^* \times A^*$ by
\begin{align*}
\Graph{\Phi}:=\{(\gamma,a,\lbl(\Phi(e_{\gamma,a}))) \mid &
\gamma \in \Nf_T, a \in A, \text{ and }
e_{\gamma,a} \in \vec E
\text{ has } \\
& \text{ initial vertex } \gamma \text{ and label }a\},
\end{align*}
is regular.
A group $G$ is autostackable if and only if $G$ admits
a regular bounded flow function~\cite{BHH:algorithms}.

Several convergent prefix-rewriting systems 
for Thompson's group $F$, over the generating set 
$A=\{x^{\pm 1},y^{\pm 1}\}$, have been described in earlier
papers; in each case the CP-RS is 
computable (can be recognized by a Turing machine), but is
not regular.
In~\cite{chst}, Cleary, Hermiller, Stein and Taback
show that Thompson's group $F$ has a bounded CP-RS $R$
(by constructing a bounded flow function)
for which the set $R$ is computable,
and the normal form set $Irr(R)$ is a set of words
labeling quasigeodesics in the Cayley graph that is
not regular, but that is a context-free language (the next
simplest language class in the Chomsky hierarchy of 
computable languages).
Guba and Sapir~\cite{GuSa} give a convergent
rewriting system $R$ for $F$ for which the set $R$ is
computable but not context-free (and hence not regular),
but the normal form set $Irr(R)$ is the regular set
\[
\Nfgs = A^* \setminus A^*\left(\{aa^{-1} \mid a \in A\} \cup
\{y^\epsilon xx^*y,y^\epsilon x^2x^*y^{-1} \mid \epsilon\in\{1,-1\}\}
  \right)A^*;
\]
that is, $\Nfgs$ is the set of all freely reduced words over $A$
that do not contain a subword of the form 
$y^\epsilon x^iy$ or $y^\epsilon x^{i+1}y^{-1}$ for any
$\epsilon\in\{1,-1\}$ and $i \geq 1$.
Section~\ref{subsec:gsrs} contains more details on the
Guba-Sapir system.

The main result of this paper is the following.

\begin{thm}\label{thm:fisautostk}
Thompson's group~$F$ is autostackable.  Moreover, the following
is a bounded regular convergent prefix-rewriting system for $F$:
\begin{eqnarray*}
R &=&  \cup_{a \in A} \left\{uaa^{-1} \ra u \mid u \in A^*\right\}  \\
& & \cup \left(\cup_{\epsilon\in\{1,-1\}} \cup_{i \in \{1,2\}}
\left\{uy^\epsilon x^iy \ra 
    ux^{i}yx^{-i-1}y^{\epsilon}x^{i+1} 
    \mid uy^\epsilon x^i \in \Nfgs\right\} \right) \\
& & \cup \left(\cup_{\epsilon\in\{1,-1\}} \cup_{i \in \{2,3\}}
\left\{uy^\epsilon x^iy^{-1} \ra  
    ux^{i}y^{-1}x^{-i+1}y^{\epsilon}x^{i-1} \mid
  uy^\epsilon x^i \in \Nfgs\right\} \right) \\
& & \cup \left\{uxy \ra   uy^{-1}xyx^{-2}yx^2 \mid
u \in \Nfgs \cap (A^*y^{\pm 1}A^*x^2)\right\}  \\
& & \cup \left\{ux^2y^{-1} \ra  u y^{-1}x^2y^{-1}x^{-1}yx \mid
u \in \Nfgs \cap (A^*y^{\pm 1}A^*x^2)\right\}.
\end{eqnarray*}
\end{thm}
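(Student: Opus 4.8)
The plan is to verify directly that the displayed set $R$ is a bounded regular convergent prefix-rewriting system for $F$; once this is done, autostackability of $F$ is immediate from the definition. I would organize the verification into five parts: (i) each rule of $R$ is a valid relation in $F$, and these relations together with the free reductions present $F$ as a monoid over $A$; (ii) $R$ is bounded; (iii) $R$ is regular; (iv) the set $Irr(R)$ of irreducible words equals the Guba--Sapir normal form set $\Nfgs$; and (v) $R$ terminates. Confluence will then follow formally, so no critical-pair analysis is needed.

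For (i) I would check that the ``short'' rules $y^\epsilon x^i y \ra x^i y x^{-i-1} y^\epsilon x^{i+1}$ and $y^\epsilon x^i y^{-1} \ra x^i y^{-1} x^{-i+1} y^\epsilon x^{i-1}$ are exactly (instances of) the Guba--Sapir relations, which hold in $F$ by a direct computation using $x^{-j} y x^{j} = x_{j+1}$ in the infinite generating set; the two special rules are then obtained by left-multiplying the $i=1$ short relation (for $xy$) and the $i=2$ short relation (for $x^2 y^{-1}$) by $y^{-1}$, so they hold as well. Since $A$ is inverse-closed, the free-reduction rules make each generator invertible, so the presented monoid is a group, and comparing relators identifies it with $F$. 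Boundedness (ii) is immediate: every rule has the form $(uu', uv')$ with $l(u')+l(v')$ bounded by a fixed constant (one checks $14$ suffices), so $R$ is bounded. For regularity (iii), each family of rules is a finite union of relations of the form $u \mapsto (u s_1, u s_2)$ with $s_1,s_2$ fixed words and $u$ ranging over an intersection of $\Nfgs$ (regular, by Guba--Sapir) with a fixed regular constraint such as $A^* y^{\pm1} A^* x^2$; since the two tracks agree on the common prefix $u$ and differ only on bounded-length suffixes, the padded relation is synchronously regular, and a finite union of regular relations is regular.

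Part (iv) is the combinatorial heart of the normal-form identification. I would first observe that $\Nfgs \sub Irr(R)$: every left-hand side of a rule, as a word, either contains a subword $aa^{-1}$ or contains a forbidden subword $y^\delta x^j y^{\pm1}$ (for the special rules this uses that the prefix $u$ ends in $x^k$ with $k \ge 2$ and contains some $y^{\pm1}$, so $uxy$ and $ux^2y^{-1}$ produce $y^\delta x^{k+1}y$ and $y^\delta x^{k+2} y^{-1}$), and a word in $\Nfgs$ contains none of these, hence admits no rule's left-hand side as a prefix. Conversely, for $Irr(R) \sub \Nfgs$ I would take any word not in $\Nfgs$, reduce to the case that it is freely reduced, locate its first forbidden subword $y^\delta x^j y^{\pm1}$, and note that the prefix preceding it lies in $\Nfgs$; the short rules then apply when $j$ is small, while for large $j$ the special rules apply because the prefix ending in $y^\delta x^{j-1}$ (respectively $y^\delta x^{j-2}$) already lies in $\Nfgs \cap A^* y^{\pm1} A^* x^2$. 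Hence $Irr(R) = \Nfgs$.

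The main obstacle is part (v), termination, since the special rules strictly increase length and re-introduce forbidden subwords (for instance $uxy \ra u\,y^{-1}xyx^{-2}yx^2$ recreates a pattern $y^\delta x^k y^{-1}$ at the junction), so no ordering based on length or free reduction alone will suffice. I would attack this either by exhibiting a well-founded weight function on $A^*$ that strictly decreases under every rule --- most plausibly a measure recording, with suitable weights, the lengths of the maximal $x$-runs flanked by $y^{\pm1}$ letters together with a free-reduction count --- or by showing that each $R$-step is covered by one or more Guba--Sapir steps in a way compatible with the (terminating) Guba--Sapir ordering, so that termination of $R$ is inherited. Once termination is established, convergence is automatic: by termination every word rewrites to some $z \in Irr(R) = \Nfgs$ with $z =_F w$, and since $\Nfgs$ is a set of unique normal forms for $F$, any two irreducible descendants of a word are freely and $F$-equal, hence equal, which is exactly the confluence required for a CP-RS. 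Combining (i)--(v) shows $R$ is a bounded regular convergent prefix-rewriting system, whence $F$ is autostackable.
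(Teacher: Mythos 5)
Your parts (i)--(iv), and your observation that confluence comes for free once termination is known (because every rule preserves the group element, $Irr(R)=\Nfgs$, and $\Nfgs$ contains exactly one word per element of $F$), are all sound; in particular your identification $Irr(R)=\Nfgs$ via shortest non-normal prefixes is correct and is essentially how one must argue. But part (v) is not a proof, and termination is precisely where all the content of this theorem lives. Of the two strategies you gesture at, the second one fails outright: an $R$-step is \emph{not} covered by Guba--Sapir steps. For the special rule $uxy \ra uy^{-1}xyx^{-2}yx^2$ with $u\equalword py^{\delta}x^{k}$, $k\ge 2$, the word $uxy$ has a unique $\Sigma$-redex $y^{\delta}x^{k+1}y$, and applying it yields $px^{k+1}yx^{-k-2}y^{\delta}x^{k+2}$, which is not the right-hand side $py^{\delta}x^{k}y^{-1}xyx^{-2}yx^2$; since $\Sigma$ is convergent, the right-hand side is not $\Sigma$-reachable from the left-hand side at all, so there is no ``compatible ordering'' to inherit. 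Your first strategy (a word-level weight built from $x$-run lengths and a free-reduction count) is only named, not constructed, and there is a structural reason to doubt any such word-level measure: the rules of $R$ fire only when the preceding prefix is in normal form, and what actually decreases is a quantity attached to the \emph{group element} at which the rewriting occurs, not to the letters of the word being rewritten.

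Concretely, the paper's termination argument requires: (a) a precise description of how the normal form of $gy^{\pm1}$ is obtained from that of $g$ (its Lemmas on rewriting $\gamma y$ and $\gamma'y^{-1}$, tracking the cumulative $x$-exponent vector $s(\gamma)=(s_n,\dots,s_0)$ and the cutoff index $m$); (b) a weight on non-tree edges of the Cayley graph, $W(e)=\sum_{k=0}^{m-1}C(s_k)$, where $C$ is the recursively defined sequence $C(1)=C(2)=1$, $C(i)=C(i-1)+2C(i-2)+2$ (the area of a canonical van Kampen diagram for the relator $[y,x^iyx^{-i-1}]$ over the finite presentation); and (c) a case analysis showing that every $y^{\pm1}$-labeled edge in the replacement path $\Phi(e)$ either lies in the normal-form tree or has strictly smaller weight, which uses telescoping-sum estimates such as $C(s_0)-C(s_0-2)=C(s_0-1)+C(s_0-2)+2>0$ and careful bookkeeping of how the cutoff index shifts in the cancellation versus no-cancellation cases. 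None of this, nor any workable substitute for it, appears in your proposal, so the proof has a genuine gap at its central step. (Your remaining parts do line up with what the paper needs: it packages (ii), (iii) and the tree-fixing property as a regular bounded flow function and invokes the equivalence with autostackability, but that repackaging is cosmetic compared to the missing termination argument.)
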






In essence, this system contains only finitely many rewriting rules
$u' \ra v'$, but when a rule can be applied is determined by the prefix, 
in normal form, preceding the subword on which the rule is to be applied.
More particularly, for each of the finitely many rewritings $u' \ra v'$, 
there is a regular language $L_{(u',v')}$
such that the rewriting $wu'z \ra wv'z$ can only be applied if $w \in L_{(u',v')}$.
Thus the word problem for Thompson's group $F$ 
can be solved by a finite state machine. 

We note that the set of
normal forms for the prefix-rewriting system in Theorem~\ref{thm:fisautostk}
is the same as the set of normal forms of the Guba-Sapir rewriting system;
however, in contrast to the Guba-Sapir rules,
the prefix-rewriting system in Theorem~\ref{thm:fisautostk} is regular,
and at each step a word of length at most 5 is replaced by
a word of length at most 10.
In our proof of Theorem~\ref{thm:fisautostk}, we consider the
topological view of autostackability, and show that
Thompson's group $F$ has a regular bounded flow function.

The paper is organized as follows.
We begin in Section~\ref{sec:background} with notation used throughout
the paper, background on regular
languages (in Section~\ref{subsec:regular}), and a more
detailed discussion of the Guba-Sapir rewriting system 
and its normal form set (in Section~\ref{subsec:gsrs}).
Section~\ref{sec:prelim} contains an analysis of the relationship
between the normal forms (in the set $\Nfgs$ above) for the two endpoints 
of an edge of the Cayley graph $\Gamma_A(F)$ of $F$.
In Section~\ref{sec:order}, we define a well-founded strict
partial order on the edges of $\Gamma$, which is used (in Section~\ref{sec:proof})
in the proof of termination for the flow function for $F$.
In Section~\ref{sec:interlude}, we give a pictorial motivation
for the choice of this strict partial order.
Finally, Section~\ref{sec:proof} contains the proof of Theorem~\ref{thm:fisautostk}.


\section{Background and notation}\label{sec:background}


Let $x:=x_0$ and $y:=x_1$ be the generators of Thompson's group~$F$,
and let $A:=\{x^{\pm 1},y^{\pm 1}\}$.
Let $A^*$ be the set of all words over $A$.
Let $\groupid$ denote the identity element of $F$, and let $\emptyword$
denote the empty word in $A^*$.  For two words $v,w \in A^*$, we 
write $v \equalword w$ if $v$ and $w$ are the same word in $A^*$, and
$v \equalinf w$ if $v$ and $w$ represent the same element of $F$.

Let $\Gamma=\Gamma_A(F)$ be the Cayley graph of Thompson's group $F$
over the generating set $A$.  
Given any directed edge $e$ of $\Gamma$, let $e_-$ and $e_+$ denote the elements
of $F$ at the initial
and terminal vertices of $e$, respectively,
and let $\lab(e)$ denote the label of the edge $e$.
We denote $e$ by $e_{\gamma,a}$ 
where $a = \lab(e)$ and $\gamma \in A^*$ is any word satisfying 
$\gamma \equalinf e_-$.


\subsection{Synchronously regular languages and rewriting systems}\label{subsec:regular}


Details and proofs of the contents of this section
can be found in~\cite{echlpt,hu,BHH:algorithms}.

Let $A$ be a finite set.
The set of all finite strings over $A$
(including the empty word $\emptyword$)
is written $A^*$.
A \emph{language} is a subset
$L\subseteq A^*$. Given
languages $L_1,L_2$
the \emph{concatenation} $L_1L_2$
of $L_1$ and $L_2$ is
the set of all expressions of the form $l_1l_2$ with $l_i\in L_i$.
The \emph{Kleene star} of $L$, denoted $L^*$,
is the union of $L^n$ over all integers $n\ge 0$.

The class of \emph{regular languages}
over $A$ is the smallest class of languages
that contains all finite languages and is closed under union,
intersection, concatenation, complement and Kleene star. (Note that
closure under some of these operations is redundant.)
Regular languages are precisely those accepted by finite state
automata; that is, by computers with a bounded amount of memory.

The concept of regularity is extended to
subsets of a Cartesian product
$(A^*)^n=A^* \times \cdots \times A^*$ of $n$ copies
of $A^*$ as follows.
Let $\$$ be a symbol not contained in $A$.
Given any tuple $w=(a_{1,1} \cdots a_{1,m_1},...,
a_{n,1} \cdots a_{n,m_n}) \in (A^*)^n$ (with
each $a_{i,j} \in A$), rewrite
$w$ to a \emph{padded word} $\hat w$ over the finite
alphabet $B:=(A \cup \$)^n$ by
$\hat w :=
(\hat a_{1,1},...,\hat a_{n,1}) \cdots (\hat a_{1,N},...,\hat a_{n,N})$
where $N=\max\{m_i\}$ and $\hat a_{i,j} =a_{i,j}$
for all $1 \le i \le n$
and $1 \le j \le m_i$ and $\hat a_{i,j}=\$$ otherwise.
A subset $L \subseteq (A^*)^n$ is called
a \emph{regular language} (or, more precisely,
\emph{synchronously regular}) if the set
$\{ \hat w \mid w \in L\}$ is a regular subset
of  $B^*$.

The following lemma, much of the proof of which can be found
in \cite[Chapter 1]{echlpt}, contains  closure
properties of regular languages that are used below in Section~\ref{sec:proof}.

\begin{lem}\label{lem:regclosure}
Let $A$ be a finite set, let $z$ be an element of $A^*$,
and let $L,L_i$ be regular languages over $A$.
Then the following languages are
also regular:
\begin{enumerate}
\item (Quotient) $L_z:=\{w\in A^*: wz\in L\}$.
\item (Product) $L_1 \times L_2 \times \cdots \times L_n$.
\end{enumerate}
\end{lem}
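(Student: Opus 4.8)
The plan is to handle the two parts separately, in each case passing to a deterministic finite state automaton for the given regular language and modifying it. For the quotient $L_z$, I would first reduce to the case in which $z$ is a single letter. If $z \equalword \emptyword$ then $L_\emptyword = L$, and for $z \equalword z'a$ with $a \in A$ and $z' \in A^*$ one checks directly from the definitions that $L_{z'a} = (L_a)_{z'}$; thus the general statement follows by induction on $l(z)$ once the single-letter case is settled. For $z \equalword a$, I would fix a deterministic automaton $M = (Q,A,\delta,q_0,F)$ accepting $L$, with $\delta^*\colon Q \times A^* \to Q$ the usual extension of $\delta$, and set $F_a := \{q \in Q \mid \delta(q,a) \in F\}$. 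The automaton $(Q,A,\delta,q_0,F_a)$ then accepts exactly those $w$ with $\delta^*(q_0,wa) \in F$, which is precisely $L_a$, so $L_a$ is regular and the induction closes.

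For the product, I would construct a single automaton over the padded alphabet $B = (A \cup \$)^n$ accepting $\{\hat w \mid w \in L_1 \times \cdots \times L_n\}$. Fix deterministic automata $M_i = (Q_i,A,\delta_i,q_{0,i},F_i)$ for the $L_i$. The states are the pairs $((q_i)_{i=1}^n, S)$ with each $q_i \in Q_i$ and $S \subseteq \{1,\dots,n\}$ recording which coordinates have already begun padding. On reading a letter $(b_1,\dots,b_n) \in B$, I would first reject the all-padding letter $(\$,\dots,\$)$, and reject if some $i \in S$ has $b_i \neq \$$ (padding, once begun in a coordinate, must persist); otherwise, for each $i \notin S$, if $b_i \in A$ advance $q_i$ to $\delta_i(q_i,b_i)$ and keep $i$ out of the new padding set, while if $b_i = \$$ leave $q_i$ unchanged and add $i$ to the padding set, and for $i \in S$ leave $q_i$ unchanged. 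A state $((q_i),S)$ is accepting when $q_i \in F_i$ for every $i$. Since each coordinate's state is frozen at the moment its padding begins, the accepted strings are exactly the genuine paddings of tuples $(w_1,\dots,w_n)$ for which $w_i$ drives $M_i$ from $q_{0,i}$ into $F_i$, that is, the paddings of tuples in $L_1 \times \cdots \times L_n$. The state set is finite, so the product is synchronously regular.

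The main obstacle will be the bookkeeping in this product construction rather than the quotient, which is entirely routine. Specifically, I expect the care to go into verifying that the constructed automaton accepts \emph{exactly} the paddings of tuples in the product: that the ``$\$$ never precedes a letter within a coordinate'' and ``no all-$\$$ column'' constraints force the accepted strings to be genuine padded words $\hat w$, and that freezing each coordinate's automaton state at the onset of padding makes the final acceptance test $q_i \in F_i$ coincide with the condition $w_i \in L_i$. Once this correspondence is checked against the definition of synchronous regularity from Section~\ref{subsec:regular}, both closure properties follow.
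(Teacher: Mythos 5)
Your proof is correct on both counts: the accepting-state modification together with the induction on the length of $z$ (via $L_{z'a} = (L_a)_{z'}$) handles the quotient, and your product automaton over $B=(A\cup\$)^n$, with the padding set $S$ freezing coordinates and the rejection of all-$\$$ columns, accepts exactly the padded words of tuples in $L_1 \times \cdots \times L_n$. The paper gives no proof of its own for this lemma---it simply cites \cite[Chapter 1]{echlpt}---and your two constructions are the standard arguments that citation appeals to, so your approach is essentially the same as the one the paper relies on.
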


The definition of a rewriting system is very close to that
of prefix-rewriting in Section~\ref{sec:intro}.
A {\em convergent rewriting system} for a group $G$
consists of a finite set $A$ and  a set
$R \subset A^* \times A^*$ of ordered pairs of words
over $A$ such that $G$ is presented, as a monoid, by
$
G = Mon \langle A \mid \{u=v \mid (u,v) \in R\} \rangle,
$
and the rewritings $wuz \ra wvz$ for all $(u,v) \in R$ and $w,z \in A^*$
satisfy the termination and confluence properties; that is,
there is no infinite sequence of rewritings, and  
the set of irreducible words 
is a set of (unique) normal forms for $G$.
(The only alteration to the definition of CP-RS needed 
to obtain the definition of convergent rewriting system
is in the prefix $w$ prepended in the rewritings.)
The rewriting system is {\em finite} if $R$ is a finite set,
and the system is {\em regular} 
if the subset $R \subset A^* \times A^*$ 
is (synchronously) regular.










\subsection{The Guba-Sapir 
rewriting system for $F$ and its normal forms}\label{subsec:gsrs}


Guba and Sapir  state in \cite[Theorem~2]{GuSa} that Thompson's group $F$ admits a 
convergent rewriting system and a regular set of normal forms in the 
standard generators $x=x_0$ and $y=x_1$.  
The convergent rewriting system given in \cite{GuSa} is: 
$$
\Sigma = \left\{ 
\begin{array}{lll|l}
aa^{-1} &\ra& \emptyword,  & a \in \{x^{\pm 1},y^{\pm 1}\},\\ 
y^\epsilon x^iy &\rightarrow& x^iyx^{-i-1}y^{\epsilon}x^{i+1},&
\epsilon\in\{1,-1\} \text{ and } i \geq 1 \\
y^\epsilon x^{i+1}y^{-1} &\rightarrow& x^{i+1}y^{-1}x^{-i}y^{\epsilon}x^i & 
\end{array} 
 \right\}.
$$
We refer to the $\Sigma$-rewriting rules $aa^{-1} \ra \emptyword$ 
as free reductions, as usual, and to 
\[
y^\epsilon x^iy \rightarrow x^iyx^{-i-1}y^{\epsilon}x^{i+1}
\]
and 
\[
y^\epsilon x^{i+1}y^{-1} \rightarrow x^{i+1}y^{-1}x^{-i}y^{\epsilon}x^{i},  
\]
with $\epsilon\in\{1,-1\}$ and $i \geq 1$, as the 
{\em $y$-rule} and {\em $y^{-1}$-rule of size} $i$, respectively.

Throughout the rest of the paper, the symbol $\Nfgs$ will denote 
the set of normal forms 
over the generating set $A$ defined by the rewriting system $\Sigma$; 
that is, $\Nfgs$ is the 
set of words that cannot be rewritten using $\Sigma$. 
We note that the set $\Nfgs$ is regular, since
 $\Nfgs$ is the set of all words over $A$ which do not contain the 
left hand side of any of the rules in $\Sigma$ as a subword, and since 
the set 
of left hand sides of rules in $\Sigma$  is a regular language. 
However, the appearance of the integer $i$ three
times on the right hand side of the $y$- and $y^{-1}$-rules of the 
rewriting system $\Sigma$
shows that
the set of padded words obtained from $\Sigma$ does not obey the Pumping Lemma for
regular languages~\cite[Lemma~3.1]{hu}, nor the Pumping Lemma for 
context-free languages~\cite[Lemma~6.1]{hu}, and so the
rewriting system~$\Sigma$ is neither (synchronously) regular nor context-free.  

The following lemma is immediate from the description of $\Sigma$. 

\begin{lem}\label{lem:normal}
	The set of normal forms $\Nfgs$ is the set of all words over $A$ of the form 
\[
 x^{i_n}y^{\epsilon_n} \cdots x^{i_1}y^{\epsilon_1}x^{i_0},
\]
	such that $n \ge 0$, each $\epsilon_j \in \{\pm 1\}$, each $i_j\in\mathbb{Z}$, and for 
        every $j\in\{1,\dots,n-1\}$ the following hold. 
	\begin{enumerate}
		\item[(1)] If $i_j=0$ then $\epsilon_j=\epsilon_{j+1}$ 
		(i.e., the word is freely reduced).
		\item[(2)] If $\epsilon_{j}=1$ then $i_j\le 0$.  
		\item[(3)] If $\epsilon_{j}=-1$ then $i_j\le 1$.  
	\end{enumerate}
\end{lem}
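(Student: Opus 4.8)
The plan is to translate the defining property of $\Nfgs$—that a word contains no left-hand side of a rule of $\Sigma$ as a subword—directly into the combinatorial conditions (1)--(3). First I would record the three families of left-hand sides of $\Sigma$: the free-reduction words $aa^{-1}$, the $y$-rule words $y^\epsilon x^i y$ with $i \geq 1$, and the $y^{-1}$-rule words $y^\epsilon x^{i+1} y^{-1}$ with $i \geq 1$ (equivalently, $y^\epsilon x^j y^{-1}$ with $j \geq 2$). Thus a word lies in $\Nfgs$ exactly when it is freely reduced and contains none of these $y$-type subwords.

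Next I would set up the canonical block decomposition. Any word over $A$ can be written as $x^{i_n} y^{\epsilon_n} \cdots x^{i_1} y^{\epsilon_1} x^{i_0}$ by keeping each $y^{\pm 1}$-letter as its own block and recording the signed number $i_j \in \mathbb{Z}$ of $x$-letters in each gap, where $n$ is the number of $y^{\pm 1}$-letters. In such an expression each $x^{i_j}$ is automatically a reduced power of $x$, and the only place free reduction can fail is at an internal junction with $i_j = 0$ (for $1 \le j \le n-1$), where the adjacent letters $y^{\epsilon_{j+1}} y^{\epsilon_j}$ cancel precisely when $\epsilon_{j+1} \ne \epsilon_j$. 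Hence a word of this form is freely reduced if and only if condition (1) holds, so the words of the stated form satisfying (1) are exactly the freely reduced words over $A$.

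It then remains to check that, among freely reduced words, avoiding the $y$- and $y^{-1}$-rule left-hand sides is equivalent to (2) and (3). Since the $x$-runs are maximal and bounded by $y$-letters or by the ends of the word, I would argue that a subword $y^\epsilon x^i y$ with $i \geq 1$ occurs if and only if some internal block $x^{i_j}$ (with $1 \le j \le n-1$) is positive and is immediately followed by $y$, i.e.\ $\epsilon_j = 1$ and $i_j \geq 1$; its absence is exactly condition (2). Symmetrically, a subword $y^\epsilon x^j y^{-1}$ with $j \geq 2$ occurs if and only if some internal block has $\epsilon_j = -1$ and $i_j \geq 2$, whose absence is exactly condition (3). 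The boundary blocks $x^{i_n}$ and $x^{i_0}$ carry no constraint, since no forbidden $y$-type subword can have its left $y$-letter before $x^{i_n}$ or its right $y$-letter after $x^{i_0}$; this is why the conditions are imposed only for $1 \le j \le n-1$.

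I do not expect a genuine obstacle here, as the argument is essentially index bookkeeping (which is why the statement is asserted to be immediate). The only points requiring care are the boundary cases—the unconstrained end blocks $i_0$ and $i_n$ and the degenerate values $n = 0, 1$—and verifying that the correspondence between forbidden subwords and failures of (2) or (3) is exact, namely that a forbidden $y$-type subword must align with a single block boundary $y^{\epsilon_{j+1}} x^{i_j} y^{\epsilon_j}$ rather than straddling the interior of an $x$-run.
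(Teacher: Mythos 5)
Your proof is correct and is essentially the paper's own argument: the paper offers no proof at all, stating that the lemma is immediate from the description of $\Sigma$, and your block-by-block translation of ``contains no left-hand side of $\Sigma$ as a subword'' into conditions (1)--(3) is precisely that immediate argument made explicit, including the key alignment observation that any forbidden $y$-type subword must sit exactly on a block $y^{\epsilon_{j+1}}x^{i_j}y^{\epsilon_j}$. One small inaccuracy worth noting: not \emph{every} word over $A$ can be written literally in the form $x^{i_n}y^{\epsilon_n}\cdots y^{\epsilon_1}x^{i_0}$ (e.g.\ $xx^{-1}$ cannot, since recording only the signed $x$-count of a gap changes the string when that gap is unreduced), but this is harmless here because the words that actually matter---$\Sigma$-irreducible words, which contain no $xx^{-1}$ or $x^{-1}x$, and words already given in the stated form---decompose exactly as you describe.
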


For any element $g \in F$, write $\nf{g}$ for the normal form
in $\Nfgs$ representing $G$; similarly, for any word $w$ over $A$,
write $\nf{w}$ for the unique word in $\Nfgs$ such that $w \equalinf \nf{w}$.

Throughout this paper we also let $T$ denote the subtree of $\Gamma$
associated with $\Nfgs$; in other words, $T$ is the subtree of $\Gamma$ such that the set of
labels of nonbacktracking paths in $T$, starting from the identity vertex, is the set of normal
forms in $\Nfgs$.

Next we give a description of the edges that do not lie in the tree $T$.  
Note that a directed edge $e=e_{\gamma,a}$ with $\gamma=\nf{e_-}$ lies on the tree $T$ 
if and only if either $\gamma a\in\Nfgs$ or the last letter of $\gamma$ is $a^{-1}$. 
That is, the edge $e=e_{\gamma,a}$ lies on $T$ 
if and only if the freely reduced word freely equivalent to $\gamma a$ belongs to $\Nfgs$. 
Hence Lemma~\ref{lem:normal} implies the following.

\begin{lem}\label{lem:yyinv1}
	Let $e=e_{\gamma,a}$ be a directed edge of $\Gamma$ where $a\in\{x^{\pm 1},y^{\pm 1}\}$ and 
	$$\gamma \equalword x^{i_n}y^{\epsilon_n}\cdots x^{i_1}y^{\epsilon_1}x^{i_0}\in \mathcal N.$$
	 Then $e$ does not lie on the tree $T$ if and only if one of the following holds. 
	 \begin{enumerate}
	 	\item[(1)] $\lab(e)\equalword  y$, $n \geq 1$, and $i_0\ge 1$.
	 	\item[(2)] $\lab(e)\equalword  y^{-1}$, $n \geq 1$,  and $i_0\ge 2$.
	 \end{enumerate}
	 In particular, all edges of $\Gamma$ labeled by $x^{\pm 1}$ lie on the tree $T$. 
\end{lem}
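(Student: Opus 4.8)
The plan is to prove Lemma~\ref{lem:yyinv1} by unwinding the characterization stated just before it: a directed edge $e=e_{\gamma,a}$ with $\gamma=\nf{e_-}$ lies on $T$ if and only if the freely reduced word freely equivalent to $\gamma a$ belongs to $\Nfgs$. So the whole task reduces to determining, for each of the four possible labels $a\in\{x^{\pm 1},y^{\pm 1}\}$, exactly when appending $a$ to a word $\gamma\equalword x^{i_n}y^{\epsilon_n}\cdots x^{i_1}y^{\epsilon_1}x^{i_0}$ in $\Nfgs$ and freely reducing produces a word that leaves $\Nfgs$. I would organize the argument as a case analysis on the letter $a$, using Lemma~\ref{lem:normal}'s description of $\Nfgs$ throughout.

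First I would dispose of the $x^{\pm 1}$ cases, which give the final sentence of the lemma. The point is that $\Nfgs$ is, by Lemma~\ref{lem:normal}, exactly the set of words of the form $x^{i_n}y^{\epsilon_n}\cdots x^{i_1}y^{\epsilon_1}x^{i_0}$ subject only to conditions (1)--(3) on the exponents $i_j$ for $j\in\{1,\dots,n-1\}$ strictly between the $y$-blocks; the trailing block $x^{i_0}$ is completely unconstrained (it carries no condition because the constraints only involve indices $j\ge 1$). Appending $x$ or $x^{-1}$ to $\gamma$ and freely reducing simply changes $i_0$ to $i_0\pm 1$ (or, if $i_0=0$, extends/cancels in the trailing block), and since no condition restricts $i_0$, the resulting word stays in $\Nfgs$. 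Hence every $x^{\pm1}$-edge lies on $T$.

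The substance is in the $y$ and $y^{-1}$ cases. For $a\equalword y$, I would compute $\gamma y$ and freely reduce. If $i_0\ge 1$ the word ends in $x^{i_0}y$ with $i_0\ge 1$, and (provided $n\ge 1$, so there is a preceding $y$-block to create a genuine subword $y^{\epsilon_1}x^{i_0}y$) this is precisely the left-hand side $y^{\epsilon}x^iy$ of a $y$-rule of $\Sigma$ with $i=i_0\ge 1$; thus $\gamma y\notin\Nfgs$ and the edge is off $T$. Conversely, if $i_0\le 0$ or $n=0$, I would check that appending $y$ either leaves the trailing exponent satisfying the (vacuous or newly-relevant) conditions of Lemma~\ref{lem:normal}, or causes a free cancellation, so that the reduced word remains in $\Nfgs$; the key sub-point is that condition (2) of Lemma~\ref{lem:normal} is exactly what guarantees a newly-interior block $x^{i_0}$ with $\epsilon=1$ is admissible when $i_0\le 0$. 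The $a\equalword y^{-1}$ case is symmetric, using the $y^{-1}$-rules, whose left-hand sides are $y^{\epsilon}x^{i+1}y^{-1}$ with $i\ge 1$, i.e.\ $x^{i_0}y^{-1}$ with $i_0\ge 2$; this yields the threshold $i_0\ge 2$ and invokes condition (3).

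I expect the main obstacle to be the bookkeeping in the converse directions, namely verifying cleanly that when $i_0$ falls below the stated threshold the freely reduced form of $\gamma a$ genuinely satisfies all three conditions of Lemma~\ref{lem:normal}, rather than merely avoiding the one obvious forbidden subword. In particular I would be careful about the boundary cases $i_0=0$ (where appending $y$ or $y^{-1}$ merges with $y^{\epsilon_1}$ or cancels it, possibly shortening the word and changing which block becomes the new trailing block) and $n=0$ (where $\gamma\equalword x^{i_0}$ has no $y$-blocks, so appending $y^{\pm 1}$ can never produce a forbidden subword and the edge stays on $T$ regardless of $i_0$). These degenerate situations are exactly why the hypotheses $n\ge 1$ appear in both clauses of the lemma, and making sure the free-reduction analysis handles them correctly is the one place requiring genuine care.
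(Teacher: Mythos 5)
Your proposal is correct and takes essentially the same route as the paper: the paper derives the lemma in exactly this way, from the observation stated immediately before it (that $e_{\gamma,a}$ lies on $T$ if and only if the freely reduced word equivalent to $\gamma a$ lies in $\Nfgs$) combined with Lemma~\ref{lem:normal}, leaving the case analysis implicit. Your write-up simply makes explicit the per-letter case checks and the boundary cases ($i_0=0$, $n=0$) that the paper's one-line deduction suppresses.
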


We end this section by defining a few parameters associated to every word in 
normal form. 

Let $\gamma= x^{i_n}y^{\epsilon_n} \cdots x^{i_1}y^{\epsilon_1}x^{i_0}$ be a 
word in normal form. Denote by $s(\gamma)$ the vector $s(\gamma) = (s_n,\dots,s_0)$ 
of cumulative $x$-exponents in $\gamma$ defined, for $k=0,\dots,n$, by 
\[ 
s_k := i_k+i_{k-1}+ \dots + i_0.  
\]
Note that the vector has at least one entry, 
even when the word $\gamma$ is empty (indeed, in that case, $s(\gamma)=(0)$). 
We also define two cutoff points, namely, let 
\[ 
m(\gamma) := \min\{k \mid s_k \leq 0\},
\]
if the set on the right hand side is nonempty; otherwise, let $m(\gamma):=n$, and similarly, 
\[ 
m'(\gamma) := \min\{k \mid s_k \leq 1\},
\]
if the set on the right hand side is nonempty; otherwise, let $m'(\gamma):=n$. 
We extend our notation to elements $g$ of $F$ by declaring 
$s(g)$ to mean $s(\nf{g})$, $m(g)$ to mean $m(\nf{g})$, and so on.


\section{Normal forms of the endpoints of edges outside of the 
normal form tree}\label{sec:prelim}



We now turn to the analysis of the relation between the normal forms of the endpoints of edges that are not on the 
normal form tree $T$. It will be convenient to set up and fix a situation and notation used throughout
Sections~\ref{sec:prelim} and~\ref{sec:order}.

Let $e$ be an edge not on the tree $T$, labeled by $y$, connecting $g$ to $g'$;
that is, 
\[
e: \stackrel{\hbox{ $g$ }}{\bullet} \stackrel{y}{\longrightarrow} \stackrel{\hbox{$g'$}}{\bullet}
\]
satisfies
$e_-=g$, $e_+=g'$, and $g' =_F gy$.
We also consider the inverse edge $e^{-1}$, labeled by $y^{-1}$, 
directed from $g'$ to $g$. 
 Let the normal forms of the endpoints of $e$ be given by 
\[ 
 \gamma := \nf{g} = x^{i_n}y^{\epsilon_n}\cdots x^{i_1}y^{\epsilon_1}x^{i_0},
\]
with each  $\epsilon_\ell \in \{\pm 1\}$, and 
\[ 
\gamma' := \nf{g'} = x^{j_{n'}}y^{\varepsilon_{n'}}\cdots x^{j_1}y^{\varepsilon_1}x^{j_0},
\]
with each  $\varepsilon_\ell \in \{\pm 1\}$. 
Note that, along with the usual requirements from Lemma~\ref{lem:normal} that the normal forms 
$\gamma$ and $\gamma'$ must satisfy, we also have the additional requirements from Lemma~\ref{lem:yyinv1}, 
since $e$ is not on the tree $T$. Thus, $n,n' \geq 1$, $i_0 \geq 1$ and $j_0 \geq 2$. 

Using the notation from Section~\ref{subsec:gsrs}, let $m:=m(\gamma)$ and 
$m':= m'(\gamma')$.  Then 
\[ 
 s(\gamma) = (s_n,\dots,s_0), \qquad s(\gamma') = (s'_{n'},\dots,s'_0),
\]
\[
 m = m(\gamma)=\min\{k\mid s_k \leq 0\}, \qquad m' = m'(\gamma') = \min\{k \mid s'_k \leq 1\}.
\]

Since the rewriting system $\Sigma$ is convergent, there is a sequence of 
$\Sigma$-rewritings from any word in $A^*$ to its normal form; indeed, 
there may be many different such derivations.
We define the {\em standard $\Sigma$-rewriting}, or
{\em standard $\Sigma$-derivation}, of a word in $A^*$ to be 
the sequence of $\Sigma$-rewritings to the normal form 
where at each rewriting step a $\Sigma$-rule is applied to the 
shortest possible rewritable prefix. 
In the following subsections we provide details of the
standard $\Sigma$-rewritings of $\gamma y$ and $\gamma' y^{-1}$ to their 
normal forms $\gamma'$ and $\gamma$, respectively.
We also extract some parameters associated to those standard $\Sigma$-rewritings 
directly from $\gamma$ and $\gamma'$, and more specifically, 
from $s(\gamma)$, $s(\gamma')$, $m$, and $m'$.



\subsection{Rewriting $\gamma y$ to $\gamma'$}\label{subsec:multbyy}


In this section we describe how to obtain the normal form  $\gamma'$ for the
terminal vertex $e_+ \equalinf \gamma y$ of $e=e_{\gamma,y}$, starting from 
the word $\gamma y$.

\begin{lem}\label{lem:multbyy} 
We have $m \geq 1$, and the following assertions hold. 
\begin{itemize}
\item[(1)]  
	If either $m=n$ or $s_m \neq 0$ or $y^{\epsilon_{m+1}} =y$, then 
	\[ 
	\gamma' =\nf{\gamma y}
	= x^{i_n}y^{\epsilon_n}\cdots x^{i_{m+1}}y^{\epsilon_{m+1}}
	x^{s_m} y x^{-s_{m-1}-1}y^{\epsilon_{m}} 
	x^{i_{m-1}}y^{\epsilon_{m-1}} \cdots x^{i_{1}}y^{\epsilon_1}x^{i_0+1}.
	\]

\item[(2)]
	Otherwise, if $m < n$, $s_m = 0$, and $y^{\epsilon_{m+1}} =y^{-1}$, then
	\[
	\gamma'=\nf{\gamma y}
	= x^{i_n}y^{\epsilon_n}\cdots x^{i_{m+2}} 
	y^{\epsilon_{m+2}}x^{i_{m+1}+i_{m}-1} y^{\epsilon_{m}} \\ x^{i_{m-1}}y^{\epsilon_{m-1}} \cdots x^{i_{1}}y^{\epsilon_1}x^{i_0+1}.
	\]
\end{itemize}
In each case, there is a sequence of $\Sigma$-rewriting rules leading from $\gamma y$ to its normal form $\gamma'$ such that no $y^{-1}$-rules are ever used, exactly $m$ $y$-rules are used and their sizes are, in the order in which they are used,
\[ 
s_0,~s_1,~s_2,~\dots,~s_{m-1}. 
\]
\end{lem}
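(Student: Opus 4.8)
The plan is to follow the standard $\Sigma$-derivation of $\gamma y$ directly, tracking the exact form of the word after each rewriting step and showing that the only applicable rule at every stage is a $y$-rule whose size is the next cumulative exponent $s_k$. First I would note that, since the edge $e$ is not on the tree $T$, Lemma~\ref{lem:yyinv1} gives $i_0 \geq 1$, so $s_0 = i_0 \geq 1 > 0$; hence $0$ does not lie in $\{k \mid s_k \leq 0\}$ and therefore $m \geq 1$.

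The core of the argument is an induction establishing that, for each $k$ with $0 \leq k \leq m$, after applying $y$-rules of sizes $s_0, s_1, \dots, s_{k-1}$ (in that order) the word has become
\[
w_k \equalword x^{i_n}y^{\epsilon_n}\cdots x^{i_{k+1}}y^{\epsilon_{k+1}} x^{s_k} y x^{-s_{k-1}-1}y^{\epsilon_k} x^{i_{k-1}}y^{\epsilon_{k-1}}\cdots x^{i_1}y^{\epsilon_1}x^{s_0+1},
\]
with the convention that for $k=0$ this reads $\gamma y$. For the inductive step I would apply the $y$-rule of size $s_k$ to the subword $y^{\epsilon_{k+1}}x^{s_k}y$ (a genuine redex precisely because $k < m$ forces $s_k \geq 1$), and then collapse the two $x$-powers that merge at the seams, using $i_{k+1}+s_k = s_{k+1}$ on the left and $(s_k+1)+(-s_{k-1}-1) = i_k$ on the right, to recover $w_{k+1}$. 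A supporting invariant I would verify alongside is that the suffix of $w_k$ lying to the right of the freshly created $y$ is itself in $\Nfgs$ (its leading exponent $-s_{k-1}-1$ is negative and the remaining blocks are inherited from $\gamma$); since the portion to the left of this $y$ is a prefix of $\gamma$, the new $y$ occurs in the unique redex of $w_k$, so the standard derivation is forced to proceed exactly as claimed.

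Once $w_m$ is reached I would analyze its reducibility according to the stated trichotomy. If $m=n$, or $s_m \neq 0$, or $\epsilon_{m+1}=1$, then $y^{\epsilon_{m+1}}x^{s_m}y$ is not a redex---either because no letter $y^{\epsilon_{m+1}}$ exists, or because $s_m \leq 0$ fails the hypothesis $s_m \geq 1$ of the $y$-rule, or because $s_m=0$ with $\epsilon_{m+1}=1$ yields the irreducible $yy$---so $w_m$ is already in normal form and gives the displayed formula in case~(1) (with the convention that the initial blocks are absent when $m=n$). In case~(2), where $m<n$, $s_m=0$, and $\epsilon_{m+1}=-1$, the vanishing of $x^{s_m}$ exposes a free reduction $y^{-1}y \ra \emptyword$; performing it and merging $x^{i_{m+1}}$ with $x^{-s_{m-1}-1}$ via $s_m = s_{m-1}+i_m = 0$ produces the exponent $i_{m+1}+i_m-1$ and the formula in case~(2).

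The main obstacle I anticipate is confirming that the word produced in case~(2) is genuinely irreducible, i.e. that the merge creates no new $y$- or $y^{-1}$-redex at the seam $y^{\epsilon_{m+2}}x^{i_{m+1}+i_m-1}y^{\epsilon_m}$. I would resolve this by bounding the merged exponent: from $s_m=0$ and the minimality of $m$ one gets $s_{m-1}\geq 1$, hence $i_m=-s_{m-1}\leq -1$; and when $m+1<n$ the normal-form condition~(3) of Lemma~\ref{lem:normal} applied to $\epsilon_{m+1}=-1$ gives $i_{m+1}\leq 1$, so $i_{m+1}+i_m-1 \leq -1 < 0$, which forbids both rule types. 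The boundary situation $m+1=n$ requires separate handling: there the merged block becomes the leading block, which has no $y$-letter to its left and so cannot open a redex regardless of its exponent. Finally, since free reductions and $y^{-1}$-rules were never invoked and exactly the $y$-rules of sizes $s_0,\dots,s_{m-1}$ were applied, the rewriting-count assertion follows at once.
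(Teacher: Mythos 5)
Your proposal is correct and takes essentially the same route as the paper's proof: the same induction through $m$ phases (apply the $y$-rule of size $s_k$, then freely reduce), the same terminal trichotomy, and the same resolution of the case~(2) seam via $i_m=-s_{m-1}\le -1$ and Lemma~\ref{lem:normal}(3) giving $i_{m+1}\le 1$, with the $m+1=n$ boundary handled separately. One cosmetic slip—for $k\ge 1$ the portion of $w_k$ to the left of the fresh $y$ ends in $x^{s_k}$ rather than $x^{i_k}$, so it is not literally a prefix of $\gamma$—is harmless, since its $y$-to-$y$ seams are inherited from $\gamma$ and, in any case, redex uniqueness is not needed for the lemma as stated (convergence of $\Sigma$ makes any valid rewriting to an irreducible word end at $\nf{\gamma y}$).
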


\begin{proof}
Since $s_0=i_0 \geq 1$, we have $m \geq 1$. 

Our standard $\Sigma$-derivation proceeds from $\gamma y$ in $m$ phases. 
During each of the $m$ phases exactly one $y$-rule is used followed by 
a complete free reduction. In detail, phases 1 through $m-1$ proceed as follows. 
In phase 1 the $y$-rule of size $s_0$ is applied to $y^{\epsilon_1}x^{s_0}y$, 
followed by a complete free reduction of the obtained word. 
Then, in phase 2 the $y$-rule of size $s_1$ is applied to $y^{\epsilon_2}x^{s_1}y$, 
followed by a complete free reduction, and so on, 
until in phase $m-1$ the $y$-rule of size $s_{m-2}$ is applied to 
$y^{\epsilon_{m-1}}x^{s_{m-2}}y$, followed by a complete free reduction. 
In phase $m$, the $y$-rule of size $s_{m-1}$ is applied to 
$y^{\epsilon_m}x^{s_{m-1}}y$, followed by free reduction (not necessarily complete), 
yielding the word 
\begin{equation}\label{e:cutoff}
x^{s_m} y x^{-s_{m-1}-1} y^{\epsilon_m} x^{i_{m-1}} y^{\epsilon_{m-1}} \dots x^{i_1} y^{\epsilon_1} x^{i_0+1}, 
\end{equation}
if $n=m$ or, otherwise, if $n>m$, 
\begin{equation}\label{e:nocutoff}
x^{i_n}y^{\epsilon_n} \dots \underline{x^{i_{m+1}} y^{\epsilon_{m+1}} x^{s_m} y x^{-s_{m-1}-1}} y^{\epsilon_m} x^{i_{m-1}} y^{\epsilon_{m-1}} \dots x^{i_1} y^{\epsilon_1} x^{i_0+1}.  
\end{equation}
At this point, to finish phase $m$ of the standard $\Sigma$-rewriting to $\gamma'$ 
we just need to complete the free reduction. 

Let us indicate why the rewriting described up to this point 
(before this last free reduction) is actually possible; that is, 
why we eventually arrive, during phase $m$, at a word of 
the form~\eqref{e:cutoff} or~\eqref{e:nocutoff} in the appropriate cases. 

The original word 
\[ 
\gamma y = x^{i_n}y^{\epsilon_n}\dots x^{i_1}y^{\epsilon_1}x^{i_0}y
 = x^{i_n}y^{\epsilon_n}\dots x^{i_1}\underline{y^{\epsilon_1}x^{s_0}y}
\] 
is freely reduced and since its prefix $\gamma$ is a normal form, 
the only available $\Sigma$-rewriting is the application of the 
$y$-rule of size $s_0$ to the underlined portion of the word, 
yielding a word that freely reduces to 
\[
x^{s_1} y x^{-s_0-1} y^{\epsilon_1} x^{i_0+1}, 
\]
if $n=m=1$, and to 
\[
x^{i_n}y^{\epsilon_n} \dots x^{i_2}\underline{y^{\epsilon_{2}}x^{s_1} y} x^{-s_0-1} y^{\epsilon_1} x^{i_0+1}, 
\]
in all other cases. 
If $m=1$ we are done (we arrived at a word of the form~\eqref{e:cutoff} 
or~\eqref{e:nocutoff} during phase 1). 

If $n \geq m \geq 2$, then we are in the second case above. 
The prefix $y^{\epsilon_n} \dots y^{\epsilon_2}$ is nonempty 
and $y^{\epsilon_2}$ actually appears in it. 
Note also that $s_0 \geq 1$, which implies that $-s_0-1 \leq -2$ and, 
since $m \geq 2$, $s_1 \geq 1$. 
Thus the word is freely reduced and the only available $\Sigma$-rewriting 
is the application of the $y$-rule of size $s_1$ to the underlined portion 
of the word, yielding a word that freely reduces to 
\[
x^{s_2} y x^{-s_1-1} y^{\epsilon_2} x^{i_1} y^{\epsilon_1} x^{i_0+1}, 
\]
if $n=m=2$, and to 
\[
x^{i_n}y^{\epsilon_n} \dots x^{i_3}\underline{y^{\epsilon_{3}}x^{s_2} y} x^{-s_1-1} y^{\epsilon_2} x^{i_1} y^{\epsilon_1} x^{i_0+1}, 
\]
in all other cases. 
If $m=2$ we are done (we arrived at a word of the form~\eqref{e:cutoff} 
or~\eqref{e:nocutoff} during phase 2).

If $n \geq m \geq 3$, then we are in the second case above. The portion of the word $y^{\epsilon_n} \dots y^{\epsilon_3}$ is nonempty and $y^{\epsilon_3}$ actually appears in it. Note also that $s_1 \geq 1$, which implies that $-s_1-1 \leq -2$ and, since $m \geq 3$, $s_2 \geq 1$. Thus the word is freely reduced and the only available $\Sigma$-rewriting is the application of the $y$-rule of size $s_2$ to the underlined portion of the word...
Et cetera; we continue this process through the $m$ phases.

Let us consider now the words~\eqref{e:cutoff} or~\eqref{e:nocutoff} obtained during phase $m$. 

Note that, since $s_{m-1} \geq 1$, we have $-s_{m-1}-1 \leq -2$. 
Therefore, if $n=m$, the word~\eqref{e:cutoff} is freely reduced and, 
since no $y^\pm$ rules are applicable, it is the normal form of $\gamma y$. 
In the other case, $n>m$, we obtain the word~\eqref{e:nocutoff}. 
If $s_m \neq 0$ or $y^{\epsilon_{m+1}}=y$ this word is also freely reduced 
and it is the normal form of $\gamma y$. 
Finally, if $n>m$, $s_m=0$, and $y^{\epsilon_{m+1}} = y^{-1}$, then there are 
further free reductions in the underlined portion of the word~\eqref{e:nocutoff}, 
yielding the word 
\[
x^{i_n}y^{\epsilon_n} \dots x^{i_{m+2}}y^{\epsilon_{m+2}} x^{i_{m+1}-s_{m-1}-1} y^{\epsilon_m} x^{i_{m-1}} y^{\epsilon_{m-1}} \dots x^{i_1} y^{\epsilon_1} x^{i_0+1}.  
\]	
We claim that, in this last case, either $n=m+1$ or $i_{m+1}-s_{m-1} -1 \leq -1$, 
which implies that the obtained word is freely reduced and, 
since no $y^\pm$-rules can be applied, it is the normal form of $\gamma y$. 
Indeed, if $n>m+1$ then $-s_{m-1} \leq -1$ (since $s_{m-1} \geq 1$) 
and $i_{m+1} \leq 1$, which yields $i_{m+1}-s_{m-1} -1 \leq -1$.
Since $s_m=0$, then $-s_{m-1}=i_m$, completing the proof of
part (2) of the lemma.
\end{proof}

\begin{rmk}
It will be useful to record some of the conclusions of Lemma~\ref{lem:multbyy} in a slightly different form. Namely, if either $m=n$ or $s_m \neq 0$ or $y^{\epsilon_{m+1}} =y$, then there are no free cancellations of $y$ letters during the standard $\Sigma$-rewriting, $n'=n+1$, and the $x$-exponents and the cumulative $x$-exponents of $\gamma'$ are given in the left half of Table~\ref{t:exponentsy}.
\begin{table}[!ht]
\begin{tabular}{cc}
	$
	\begin{array}{|c|c|c|}
	\hline 
	k & j_k & s'_k \\
	\hline 
	0  & i_0+1    & s_0+1 \\
	1  & i_1      & s_1+1 \\
	\vdots & \vdots & \vdots \\
	m-1 & i_{m-1} & s_{m-1}+1 \\
	m   & -s_{m-1}-1 & 0 \\
	m+1 & s_m  & s_{m} \\
	m+2 & i_{m+1} & s_{m+1} \\
	\vdots & \vdots & \vdots \\
	\vdots & \vdots & \vdots \\
	n+1 & i_{n} & s_{n}\\
	\hline
	\end{array}
	$
	&
	$
	\begin{array}{|c|c|c|}
	\hline 
	k & j_k & s'_k \\
	\hline 
	0  & i_0+1    & s_0+1 \\
	1  & i_1      & s_1+1 \\
	\vdots & \vdots & \vdots \\
	m-1 & i_{m-1} & s_{m-1}+1 \\
	m   & i_{m+1}+i_m-1 & s_{m+1} \\
	m+1 & i_{m+2} & s_{m+2} \\
	\vdots & \vdots & \vdots \\
	n-1 & i_{n} & s_{n}\\
	\hline
	\end{array}
	$
\end{tabular}
\caption{$x$-exponents and cumulative $x$-exponents of $\gamma'$: the no $y$-cancellation case (left) and the $y$-cancellation case (right)}
\label{t:exponentsy}
\end{table}
We call this case the {\em no $y$-cancellation case}. Otherwise, if $m<n$, $s_m=0$, and $y^{\epsilon_{m+1}} =y$, then there is a single free cancellation of $y^\pm$-letters during the standard $\Sigma$-rewriting, $n'=n-1$ (note that, in this case $n \geq m+1 \geq 2$), and the $x$-exponents and the cumulative $x$-exponents of $\gamma'$ are given in the right half of Table~\ref{t:exponentsy}. We call this latter case the {\em $y$-cancellation case}. 
\end{rmk}



\subsection{Rewriting $\gamma' y^{-1}$ to $\gamma$}\label{subsec:multbyyy}


The previous lemma describes, in much detail, how to ``multiply by $y$''. The next lemma, regarding ``multiplication by $y^{-1}$'', has a similar statement and an analogous proof, so we skip some but not all details, since there are a few subtle points that are sufficiently different. 

\begin{lem}\label{lem:multbyyy}
We have $m' \geq 1$, and the following assertions hold.
\begin{itemize}
\item[(1)]  
	If either $m'=n'$ or $s'_{m'} \neq 0$ or $y^{\varepsilon_{m'+1}} =y^{-1}$, then 
	\[
	\gamma = \nf{\gamma' y^{-1}}
	= x^{j_{n'}}y^{\varepsilon_{n'}}\cdots x^{j_{m'+1}}y^{\varepsilon_{m'+1}}
	x^{s'_{m'}} y^{-1} x^{-s'_{m'-1}+1}y^{\varepsilon_{m'}} 
	x^{j_{m'-1}}y^{\varepsilon_{m'-1}} \cdots x^{j_{1}}y^{\varepsilon_1}x^{j_0-1}.
	\]

\item[(2)]
	Otherwise, if $m' < n'$, $s'_{m'} = 0$, and $y^{\varepsilon_{m'+1}} =y$, then
	\[
	\gamma = \nf{\gamma' y^{-1}}
	= x^{j_{n'}}y^{\varepsilon_{n'}}\cdots x^{j_{m'+2}} 
	y^{\varepsilon_{m'+2}}x^{j_{m'+1}+j_{m'}+1} y^{\varepsilon_{m'}} \\ x^{j_{m'-1}}y^{\varepsilon_{m'-1}} \cdots x^{j_{1}}y^{\varepsilon_1}x^{j_0-1}.
	\]
\end{itemize}	
In each case, there is a sequence of $\Sigma$-rewriting rules leading from $\gamma' y^{-1}$ to its normal form $\gamma$ such that no $y$-rules are ever used, exactly $m'$ $y^{-1}$-rules are used and their sizes are, in the order in which they are used,
\[ 
s'_0-1,~s'_1-1,~s'_2-1,~\dots~,~s'_{m'-1}-1. 
\]
\end{lem}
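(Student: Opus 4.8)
The plan is to follow the proof of Lemma~\ref{lem:multbyy} step for step, interchanging the roles of the $y$-rule and the $y^{-1}$-rule and flipping signs throughout. Three systematic shifts account for every difference. First, the rule now in play is the $y^{-1}$-rule of size $i$, whose left-hand side $y^{\epsilon}x^{i+1}y^{-1}$ requires an $x$-block of exponent at least $i+1\ge 2$, rather than the $\ge 1$ demanded by the $y$-rule. Second, the sizes used are therefore $s'_k-1$ in place of $s_k$. Third, the cutoff threshold rises from $s_k\le 0$ to $s'_k\le 1$, which is exactly why $m'$ is defined with the bound $1$. I would open by recording that $m'\ge 1$: since $j_0\ge 2$ we have $s'_0=j_0\ge 2>1$, so $0\notin\{k\mid s'_k\le 1\}$ and hence $m'\ge 1$.

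I would then run the standard $\Sigma$-derivation of $\gamma' y^{-1}$ in $m'$ phases, each consisting of a single $y^{-1}$-rule application followed by a free reduction. In phase $k$ (for $1\le k\le m'$) the $y^{-1}$-rule of size $s'_{k-1}-1$ is applied to the factor $y^{\varepsilon_k}x^{s'_{k-1}}y^{-1}$, which is legitimate because $s'_{k-1}\ge 2$ whenever $k-1<m'$. After the free reduction, the unchanged prefix and the reconstituted tail are inherited from the normal form $\gamma'$ (only the leading and trailing $x$-blocks being altered, the leading junction becoming the negative block $x^{-s'_{k-1}+1}$), so they carry no rewritable pattern; the unique shortest rewritable prefix is the next active factor $y^{\varepsilon_{k+1}}x^{s'_k}y^{-1}$, to which a $y^{-1}$-rule applies exactly when $s'_k\ge 2$. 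By the definition of $m'$ this proceeds precisely through phase $m'$, so the derivation uses the $y^{-1}$-rules of sizes $s'_0-1,\dots,s'_{m'-1}-1$ and no $y$-rules.

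Finally I would analyze the cutoff. After phase $m'$ the active factor is $x^{s'_{m'}}y^{-1}x^{-s'_{m'-1}+1}y^{\varepsilon_{m'}}\cdots$ with $s'_{m'}\le 1$, so no further $y^{-1}$-rule applies. If $m'=n'$, or $s'_{m'}\ne 0$, or $y^{\varepsilon_{m'+1}}=y^{-1}$, the word is already freely reduced and in normal form, giving part~(1). The only delicate case is $m'<n'$, $s'_{m'}=0$, $y^{\varepsilon_{m'+1}}=y$: here $x^{s'_{m'}}$ is empty, the exposed $y\cdot y^{-1}$ cancels, and the two adjacent $x$-blocks fuse into $x^{j_{m'+1}+j_{m'}+1}$ (using $j_{m'}=-s'_{m'-1}$, from $s'_{m'}=0$), yielding part~(2). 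I expect the main obstacle to be the same as in the $y$-case: confirming that this fused word really is the normal form $\gamma$, that is, that it obeys Lemma~\ref{lem:normal}. The key estimate is that when $n'>m'+1$ the hypothesis $y^{\varepsilon_{m'+1}}=y$ forces $\varepsilon_{m'+1}=1$, so Lemma~\ref{lem:normal} gives $j_{m'+1}\le 0$, while $s'_{m'-1}\ge 2$ gives $j_{m'}=-s'_{m'-1}\le -2$; hence $j_{m'+1}+j_{m'}+1\le -1$, so the fused exponent is negative and the word is freely reduced and normal (and when $n'=m'+1$ this block is the leading term, so no condition is required).
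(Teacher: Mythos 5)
Your proposal is correct and follows essentially the same route as the paper's proof: the same $m'$-phase standard $\Sigma$-derivation applying the $y^{-1}$-rules of sizes $s'_0-1,\dots,s'_{m'-1}-1$, the same case split at the cutoff, and the same key estimate (for $n'>m'+1$, $\varepsilon_{m'+1}=1$ gives $j_{m'+1}\le 0$ and $s'_{m'-1}\ge 2$ gives $j_{m'}+j_{m'+1}+1\le -1$) to verify that the fused word in the cancellation case is in normal form. The paper itself presents this proof as the mirror image of Lemma~\ref{lem:multbyy} with exactly the sign and threshold shifts you identify, so there is nothing to add.
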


\begin{proof}
Since $s'_0=j_0 \geq 2$, we have $m' \geq 1$. 

Our standard $\Sigma$-derivation proceeds from $\gamma' y^{-1}$ in $m'$ phases. 
During each of the $m'$ phases exactly one $y^{-1}$-rule is used followed by 
a complete free reduction. In detail, phases 1 through $m'-1$ proceed as follows. 
In phase 1 the $y^{-1}$-rule of size $s'_0-1$ is applied to 
$y^{\varepsilon_1}x^{s'_0}y^{-1}$, followed by a complete free reduction of 
the obtained word. Then, in phase 2 the $y^{-1}$-rule of size $s'_1-1$ is 
applied to $y^{\varepsilon_2}x^{s'_1}y^{-1}$, 
followed by a complete free reduction, and so on, until 
in phase $m'-1$ the $y^{-1}$-rule of size $s'_{m'-2}-1$ is applied to 
$y^{\varepsilon_{m'-1}}x^{s'_{m-2}}y^{-1}$, followed by a complete free reduction. 
In phase $m'$, the $y^{-1}$-rule of size $s'_{m'-1}-1$ is applied to 
$y^{\varepsilon_{m'}}x^{s'_{m'-1}}y^{-1}$, followed by free reduction 
(not necessarily complete), yielding the word 
\begin{equation}\label{e:cutoff2}
x^{s'_{m'}} y^{-1} x^{-s'_{m'-1}+1} y^{\varepsilon_{m'}} x^{j_{m'-1}} y^{\varepsilon_{m'-1}} \dots x^{j_1} y^{\varepsilon_1} x^{j_0-1}, 
\end{equation}
if $n'=m'$ or, otherwise, the word 
\begin{equation}\label{e:nocutoff2}
x^{j_{n'}}y^{\varepsilon_{n'}} \dots \underline{x^{j_{m'+1}} y^{\varepsilon_{m'+1}} x^{s'_{m'}} y^{-1} x^{-s'_{m'-1}+1}} y^{\varepsilon_{m'}} x^{j_{m'-1}} y^{\varepsilon_{m'-1}} \dots x^{j_1} y^{\varepsilon_1} x^{j_0-1}.  
\end{equation}

Note that, since $s'_{m'-1} \geq 2$, we have $-s'_{m'-1}+1 \leq -1$. 
Therefore, if $n'=m'$, the word~\eqref{e:cutoff2} is freely reduced and, 
since no $y^\pm$ rules are applicable, it is the normal form of $\gamma' y^{-1}$. 
In the other case, $n'>m'$, we obtain the word~\eqref{e:nocutoff2}. 
If $s'_{m'} \neq 0$ or $y^{\varepsilon_{m'+1}}=y^{-1}$ this word is also 
freely reduced and it is the normal form of $\gamma' y^{-1}$. 
Finally, if $n'>m'$, $s'_{m'}=0$, and $y^{\varepsilon_{m'+1}} = y$, 
then there are further free reductions in the underlined portion 
of the word~\eqref{e:nocutoff2}, yielding the word 
\[
x^{j_{n'}}y^{\varepsilon_{n'}} \dots x^{j_{m'+2}}y^{\varepsilon_{m'+2}} 
   x^{j_{m'+1}-s'_{m'-1}+1} y^{\varepsilon_{m'}} x^{j_{m'-1}} y^{\varepsilon_{m'-1}} 
    \dots x^{j_1} y^{\varepsilon_1} x^{j_0-1}.  
\]	
We claim that, in this last case, either $n'=m'+1$ or $j_{m'+1}-s'_{m'-1} +1 \leq -1$, 
which implies that the obtained word is freely reduced and, since no 
$y^\pm$-rules can be applied, it is the normal form of $\gamma' y^{-1}$. 
Indeed, if $n'>m'+1$ then $s'_{m'-1} \leq -2$ 
(since $s'_{m'-1} \geq 2$) and $j_{m'+1} \leq 0$ 
(since $\varepsilon_{m'+1} =1$), which yields $j_{m'+1}-s'_{m'-1} +1 \leq -1$.
Since $s'_{m'}=0$, then $-s'_{m'-1}=j_{m'}$, completing the proof of
part (2) of the lemma.
\end{proof}

\begin{rmk}
It will also be useful to record some of the conclusions of Lemma~\ref{lem:multbyyy} in a slightly different form. Namely, if either $m'=n'$ or $s'_{m'} \neq 0$ or $y^{\epsilon_{m'+1}} =y$, then there are no free cancellations of $y$ letters during the standard $\Sigma$-rewriting, $n=n'+1$, and the $x$-exponents and the cumulative $x$-exponents of $\gamma$ are given in the left half of Table~\ref{t:exponentsyy}.
\begin{table}[!ht]
\begin{tabular}{cc}
	$
	\begin{array}{|c|c|c|}
	\hline 
	k      & i_k & s_k \\
	\hline 
	0      & j_0-1    & s'_0-1 \\
	1      & j_1      & s'_1-1 \\
	\vdots & \vdots & \vdots \\
	m'-1   & j_{m'-1} & s'_{m'-1}-1 \\
	m'     & -s'_{m'-1}+1 & 0 \\
	m'+1   & s'_{m'}  & s'_{m'} \\
	m'+2   & j_{m'+1} & s'_{m'+1} \\
	\vdots & \vdots & \vdots \\
	\vdots & \vdots & \vdots \\
	n'+1   & j_{n'} & s'_{n'}\\
	\hline
	\end{array}
	$
	&
	$
	\begin{array}{|c|c|c|}
	\hline 
	k & i_k & s_k \\
	\hline 
	0  & j_0-1    & s'_0-1 \\
	1  & j_1      & s'_1-1 \\
	\vdots & \vdots & \vdots \\
	m'-1 & j_{m'-1} & s'_{m'-1}-1 \\
	m'   & j_{m'+1}+j_{m'}+1 & s'_{m'+1} \\
	m'+1 & j_{m'+2} & s'_{m'+2} \\
	\vdots & \vdots & \vdots \\
	n'-1 & j_{n'} & s'_{n'}\\
	\hline
	\end{array}
	$
\end{tabular}
\caption{$x$-exponents and cumulative $x$-exponents of $\gamma'$: the no $y^{-1}$-cancellation case (left) and the $y^{-1}$-cancellation case (right)}
\label{t:exponentsyy}
\end{table}
We call this case the {\em no $y^{-1}$-cancellation case}. Otherwise, if $m'<n'$, $s'_{m'}=0$, and $y^{\epsilon_{m'+1}} =y$, then there is a single free cancellation of $y^\pm$-letters during the standard $\Sigma$-rewriting, $n=n'-1$, and the $x$-exponents and the cumulative $x$-exponents of $\gamma$ are given in the right half of Table~\ref{t:exponentsyy}. We call this latter case the {\em $y^{-1}$-cancellation case}. 
\end{rmk}

In the Lemmas~\ref{lem:multbyy} and~\ref{lem:multbyyy}, we produced standard $\Sigma$-rewritings of $\gamma y$ and $\gamma' y^{-1}$ when the edges corresponding to $y$ and $y^{-1}$, respectively, are not on the normal form tree. We do not need the following observation, but a careful reading of the proofs reveals that any $\Sigma$-rewriting of $\gamma y$ uses $m$ $y$-rules of the  same sizes and in the same order as in the standard rewriting, and uses no $y^{-1}$-rules. Similarly, any $\Sigma$-rewriting of $\gamma' y^{-1}$ uses $m'$ $y^{-1}$-rules of the same sizes and in the same order as in the standard rewriting, and uses no $y$-rules. In other words, the only freedom of choice we have during the $\Sigma$-rewriting of $\gamma y$ and $\gamma' y^{-1}$ is in the order of performing the free reductions (including the possibility of postponing some free reductions and applying some $y^\pm$-rules before the word is freely reduced). 


\section{Ordering the directed edges in the Cayley graph}\label{sec:order}


In this section we define a well-founded strict partial order on the
edges of the Cayley graph $\Gamma$ of $F$, which will be used in 
Section~\ref{sec:proof} in the proof of the termination of the flow function.

Throughout this section we use the same notation defined in
Section~\ref{sec:prelim}.  In particular, $e$ is a directed edge 
labeled by $y$, the initial vertex $g$
of $e$ has normal form $\gamma$ and the terminal vertex has 
normal form $\gamma'$,  
the vectors of cumulative $x$-exponents are
$ s(\gamma) = (s_n,\dots,s_0)$ and $s(\gamma') = (s'_{n'},\dots,s'_0)$,
and the cutoff points are $m:=m(\gamma)$ and $m':= m'(\gamma')$.


\subsection{Size sequence of an edge}\label{subsec:sizeseq}


In this subsection we define the size sequence of a directed edge 
$e$  of the
Cayley graph $\Gamma$ of Thompson's group $F$ that does not
lie in the tree $T$ determined by the set $\Nfgs$ of normal forms.

We begin by showing that, not surprisingly, the $\Sigma$-rewritings corresponding
 to pairs of inverse edges are closely related. 

\begin{lem}\label{l:same}
We have $m=m'$ and the sequence of $y$-rule sizes used in the rewriting of $\gamma y$ is the same (including the order) as the sequence of $y^{-1}$-rule sizes in the rewriting of $\gamma' y^{-1}$. 
\end{lem}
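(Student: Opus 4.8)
The plan is to exploit the symmetry between Lemma~\ref{lem:multbyy} and Lemma~\ref{lem:multbyyy} together with the fact that $e$ and $e^{-1}$ are genuinely inverse edges, so multiplication by $y$ and by $y^{-1}$ undo one another. Recall that $\gamma = \nf{g}$ is the normal form of the initial vertex and $\gamma' = \nf{g'} = \nf{\gamma y}$ is the normal form of the terminal vertex, with $g' =_F gy$ and hence $g =_F g'y^{-1}$, so that $\gamma = \nf{\gamma' y^{-1}}$. Thus Lemma~\ref{lem:multbyy} applied to the $y$-edge and Lemma~\ref{lem:multbyyy} applied to its inverse $y^{-1}$-edge are describing the two directions of the \emph{same} pair of inverse rewritings. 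The key is to compare the two cumulative exponent vectors $s(\gamma) = (s_n,\dots,s_0)$ and $s(\gamma') = (s'_{n'},\dots,s'_0)$ using the explicit Tables~\ref{t:exponentsy} and~\ref{t:exponentsyy}.

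First I would establish $m = m'$. By definition $m = \min\{k \mid s_k \le 0\}$ and $m' = \min\{k \mid s'_k \le 1\}$. The natural approach is to read off the relationship between the $s'_k$ and the $s_k$ from Table~\ref{t:exponentsy} (the multiplication-by-$y$ table), which expresses the cumulative exponents of $\gamma'$ in terms of those of $\gamma$. In the no $y$-cancellation case, the left half of Table~\ref{t:exponentsy} shows $s'_k = s_k + 1$ for $0 \le k \le m-1$, then $s'_m = 0$, and $s'_k = s_{k-1}$ for $k \ge m+1$. Since $s_k \ge 1$ for all $k < m$ (by minimality of $m$), we get $s'_k = s_k + 1 \ge 2 > 1$ for $k < m$, while $s'_m = 0 \le 1$; hence $m' = \min\{k \mid s'_k \le 1\} = m$. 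The $y$-cancellation case (right half of the table) requires a short separate check, again using the minimality defining $m$ and the recorded values $s'_k = s_k+1$ for $k \le m-1$ together with $s'_m = s_{m+1}$; here one must verify that no index below $m$ already has $s'_k \le 1$, which follows from $s_k \ge 1$ for $k<m$.

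With $m = m'$ in hand, I would then identify the sizes. By Lemma~\ref{lem:multbyy} the $y$-rules used in rewriting $\gamma y$ have sizes $s_0, s_1, \dots, s_{m-1}$ in that order, and by Lemma~\ref{lem:multbyyy} the $y^{-1}$-rules used in rewriting $\gamma' y^{-1}$ have sizes $s'_0 - 1, s'_1 - 1, \dots, s'_{m'-1} - 1$ in that order. Since $m = m'$ and, in the relevant range $0 \le k \le m-1$, Table~\ref{t:exponentsy} gives $s'_k = s_k + 1$, we obtain $s'_k - 1 = s_k$ for each such $k$. Therefore the two size sequences coincide entry-by-entry and in order, which is exactly the claim.

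The main obstacle I anticipate is bookkeeping in the $y$-cancellation case, where $n' = n-1$ and the index shift in Table~\ref{t:exponentsy} is not a clean $+1$ across the board; one must be careful that the first $m$ entries (indices $0$ through $m-1$) of $s(\gamma')$ are unaffected by the cancellation, since the cancellation only alters entries at index $m$ and above. Concretely, I would note that in both the cancellation and no-cancellation cases the rows $k = 0, \dots, m-1$ of Table~\ref{t:exponentsy} are identical, recording $s'_k = s_k + 1$, and that the sizes used in both lemmas draw only on these first $m$ entries; hence the cancellation subtlety, while present for the overall structure of $\gamma'$, does not affect the size sequences or the equality $m = m'$. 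This confines the only real case analysis to verifying $m = m'$, after which the size equality is immediate from $s'_k - 1 = s_k$.
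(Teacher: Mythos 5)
Your overall plan is sound, and the second half of your argument (once $m=m'$ is known, the sizes match because $s'_k-1=s_k$ for $0\le k\le m-1$) is exactly the paper's. But there is a genuine gap in your treatment of the $y$-cancellation case of the equality $m=m'$: your ``short separate check'' only verifies that $s'_k=s_k+1\ge 2$ for $k<m$, which yields $m'\ge m$, and never establishes the reverse inequality $m'\le m$. In the no-cancellation case you got $m'\le m$ for free from the row-$m$ entry $s'_m=0$ of Table~\ref{t:exponentsy}, but in the cancellation case that entry is $s'_m=s_{m+1}$, and there is no a priori reason it should be $\le 1$. Closing this requires inputs you never invoke: when $m+1<n$, the cancellation hypotheses $s_m=0$ and $\epsilon_{m+1}=-1$ combine with Lemma~\ref{lem:normal}(3) to give $i_{m+1}\le 1$, hence $s'_m=s_{m+1}=i_{m+1}\le 1$; but when $m+1=n$ the exponent $i_{m+1}=i_n$ is unconstrained by the normal form conditions, $s'_m$ can be arbitrarily large, and one must instead appeal to the fallback clause in the definition of $m'$ (namely $m'(\gamma'):=n'$ when the set $\{k\mid s'_k\le 1\}$ is empty) together with $n'=n-1=m$. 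Without these two sub-arguments, $m'=m$ in the cancellation case is unproven.

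The paper sidesteps all of this with a symmetric two-table argument that never examines row $m$: Table~\ref{t:exponentsy} gives $s'_k=s_k+1\ge 2$ for $k\le m-1$, hence $m'\ge m$, and Table~\ref{t:exponentsyy} --- applied to the rewriting of $\gamma'y^{-1}$, i.e., to the inverse edge --- gives $s_k=s'_k-1\ge 1$ for $k\le m'-1$, hence $m\ge m'$. Since only the rows below the respective cutoffs are used, and those rows are identical in the cancellation and no-cancellation halves of each table, no case analysis arises at all. I recommend repairing your proof this way (using both lemmas, one per inequality) rather than patching the row-$m$ computation in the cancellation case.
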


\begin{proof}
From Table~\ref{t:exponentsy}
we have $s'_k = s_k + 1 \geq 1+1 = 2$, for $k = 0,\dots,m-1$, and so $m' \ge m$. 
Similarly, from Table~\ref{t:exponentsyy}
we have $s_k = s'_k - 1 \geq 2-1 = 1$, for $k = 0,\dots,m'-1$, and so $m \ge m'$. 
Hence $m=m'$.
We see from Lemma~\ref{lem:multbyyy} that the 
sizes of the {$y^{-1}$-rules} used in the standard $\Sigma$-rewriting of 
$\gamma'y^{-1}$ to $\gamma$
are, for $k =0,\dots,m-1$ (thus, in order in which they are used), $s'_k -1 = s_k$, agreeing with the ordered list of sizes 
in the rewritings from $\gamma y$ to $\gamma'$ in Lemma~\ref{lem:multbyy}.
\end{proof}

We define the {\em size sequence} of the edge $e$ labeled by $y$ to be the sequence 
$$
\sigma(e)=(s_0,\dots,s_{m-1})
$$
of $y$-rule sizes used to ``cross'' the edge $e$, that is, 
to rewrite $\gamma y$ to $\gamma'$. Note that both the length $m(\gamma)$ 
of this sequence, denoted from now on also by $m(e)$, 
and its members can be easily ``read'' from the $x$-exponents of the 
normal form of the vertex $g=e_-$. Similarly, we define the size sequence 
of the edge $e^{-1}$ labeled by $y^{-1}$ to be the sequence 
$$
\sigma(e^{-1})=(s'_0-1,\dots,s'_{m'-1}-1)
$$ 
of $y^{-1}$-rule sizes used to ``cross'' the edge $e^{-1}$ 
that is, to rewrite $\gamma' y^{-1}$ to $\gamma$. 
Note that both the length $m'(\gamma')$ of this sequence, denoted from now 
on also by $m(e^{-1})$, and its members can be easily ``read'' 
from the $x$-exponents of the normal form of the vertex $g'=(e^{-1})_-$. 
Moreover, by Lemma~\ref{l:same}, these two sequences coincide, that is, 
they have the same length $m(e)=m(e^{-1})$ and exactly the same members 
$\sigma(e)=\sigma(e^{-1})$. Therefore, the size sequence of an edge can be 
easily read from the vertex of either end of the edge, by using either 
Table~\ref{t:exponentsy} or Table~\ref{t:exponentsyy}, 
as convenient and appropriate. 



\subsection{Weight and order on edges outside of the tree of normal forms}\label{subsec:weight}


In this subsection we define a well-founded strict partial order on the directed edges of the Cayley graph $\Gamma$ of $F$ over the generating set $A = \{x^{\pm 1},y^{\pm 1}\}$. A well-founded strict partial order is a strict partial order for which there is no infinite descending chain;
such orders are useful in proving termination of (prefix-)rewriting systems.  

The order will be defined by simply using a weight function on the edges. The weight function uses the size sequence of the edge and the following sequence. 

Let $C$ be the sequence of positive integers defined recursively by 
\begin{align*}
 C(1) &= 1, \\
 C(2) &= 1, \\
 C(i) &= C(i-1) + 2C(i-2) + 2, \text{ for } i \geq 3.  
\end{align*}
The members of the sequence are explicitly given by 
\[
 C(i) = \frac{2}{3} \cdot 2^i - \frac{2}{3}(-1)^i  - 1,
\]
but the only important features of the sequence $C$ for our purposes are that it is nondecreasing sequence of positive integers such that 
$C(i)>C(i-1)+C(1)$ for all $i \geq 3$.
Of course, formally speaking, this means that we could have chosen another, perhaps simpler, sequence than $C$ to define the weights, but we have chosen $C$ because it actually has meaningful  interpretation within our problem, described further in
Section~\ref{sec:interlude}.

\begin{defn}[Edge weight]
Let $e$ be a directed edge in $\Gamma$ that is not on the normal form tree $T$, with size sequence $\sigma(e)=(s_0,\dots,s_{m-1})$ of length $m=m(e)$. The {\em weight} $W(e)$ of the edge $e$ is the positive integer 
\[
 W(e) = \sum_{k=0}^{m-1} C(s_k). 
\]
\end{defn}

\begin{defn}[Edge order]
We define a strict partial order on the set of edges not on the normal from tree by 
\[
 e \prec e' \iff W(e) < W(e').
\]
\end{defn}

The relation $\prec$ defined above is indeed a well-founded strict partial order,
since the relation $<$ on the natural numbers is a well-founded strict partial order. 
Note also that Lemma~\ref{l:same} shows that
$W(e)=W(e^{-1})$ for any edge $e$.



\section{Interlude: Motivation for the definition of the weight function}\label{sec:interlude}


In this section we give a pictorial, diagrammatic motivation for
the weight function defined in Section~\ref{subsec:weight}.
We note that the rest of the paper, including the proof of
Theorem~\ref{thm:fisautostk} in Section~\ref{sec:proof}, does
not require the material in this section; this section is provided to
explain how the proof came about.


\subsection{Background on van Kampen diagrams}\label{subsec:vkd}


Let ${\mathcal{P}} = \langle A \mid R \rangle$ be a presentation
for a group $G$, with the generating set $A$ closed under inversion.
Let $w \in A^*$ be an arbitrary word
that represents the
trivial element $\groupid$ of $G$.  A {\em van Kampen
diagram} $\Delta$ for $w$ with respect to ${\mathcal{P}}$  
is a finite,
planar, contractible combinatorial 2-complex with 
edges directed and
labeled by elements of $A$, satisfying the
properties that the boundary of 
$\Delta$ is an edge path labeled by the
word $w$ starting at a basepoint 
vertex $*$ and
reading counterclockwise, and every 2-cell in $\Delta$
has boundary labeled by an element of $R$.

Since $w=_G \groupid$, there is a factorization of
$w$ in the free group $F(A)$ as a product of conjugates of
elements of the relator set $R$.  Such a factorization
gives rise to a van Kampen diagram for $w$, with the same number
of 2-cells as factors.  
In general, there may be many different van 
Kampen diagrams for the word $w$.  

See for example~\cite{bridson} or~\cite{lyndonschupp} 
for an exposition of the theory of van Kampen diagrams.


\subsection{Filled box diagrams for Thompson's group $F$}\label{subsec:box}


We consider the
generating set $A=\{x^{\pm 1},y^{\pm 1}\}$ of $F$,
and construct van Kampen diagrams using
two presentations of $F$ with this generating set, namely
\begin{eqnarray*}
{\mathcal{P}} &:=& \langle A \mid 
[y,xyx^{-2}], [y,x^{2}yx^{-3}] \rangle, \\
{\mathcal{P}}' &:=& \langle A \mid 
\{[y,x^{i}yx^{-i-1}] \mid i \ge 1\} \rangle.
\end{eqnarray*}

As in previous sections, 
let $e$ be a directed edge, with label $y$, 
in the Cayley graph $\Gamma=\Gamma_A(F)$ that does not lie in the tree $T$
determined by the normal form set $\Nfgs$.
Let 
$$
\gamma:=\nf{e_-}=x^{i_n}y^{\epsilon_n}\dots x^{i_1}y^{\epsilon_1}x^{i_0}
$$
and $\gamma':=\nf{e_+}$. 
The vector of cumulative $x$-exponents for $\gamma$ is
$ s(\gamma) = (s_n,\dots,s_0)$, 
the cutoff point is $m:=m(\gamma)$, 
and the size sequence is $\sigma(e)=(s_0,\dots,s_{m-1})$.

Consider the word $z=z(e):=\gamma y (\gamma')^{-1}$ in $A^*$,
where $(~)^{-1}$ denotes a formal inverse.
The word $z$ represents the identity element of $F$, and so 
for each of the presentations of $F$ with generating set $A$, there is
a van Kampen diagram for $z$.  We describe a procedure
for building a specific
van Kampen diagram $\Delta_z$ for $z$ with respect to the presentation 
$\mathcal{P}$
as follows.

First we use the analysis in Section~\ref{subsec:multbyy}
to build a van Kampen diagram $\Delta_z'$ for $z$ over the presentation
$\mathcal{P}'$.  
Each of the $m$ phases of the standard $\Sigma$-rewriting 
from the word $\gamma y$ to the word $\gamma'$
consists of applying a single defining relation from the
presentation $\mathcal{P}'$.  In particular, for 
$0 \le k \le m-1$, the relation applied is
$[y,x^{s_k}yx^{-s_k-1}]$.  
We draw the 2-cell for this relation as a rectangle, with
top and bottom labeled $y^{\epsilon_{k+1}}$ and pointing to the right,
with left and right sides labeled $x^{s_k} y x^{-s_k-1}$ and
oriented from bottom to top. 
The diagram $\Delta_z'$ consists of a 1-dimensional edge path
for a common prefix of $\gamma y$ and $\gamma'$, together with 
these $m$ rectangles, or ``boxes'', in a horizontal sequence
with the right side of the $k$-th 2-cell attached to the 
left side of the $(k-1)$-th 2-cell along the vertical $y$ edges, 
also gluing as many $x$ and $x^{-1}$ edges as possible along the
two vertical sides.  
An illustration of the diagram $\Delta_z'$ in the case
that $\gamma = x^2y^{-1}xy^{-1}x^{-2}yx^4$ is given in
Figure~\ref{f:deltaprime}.
\begin{figure}[!ht]
\includegraphics[width=3in,height=2in]{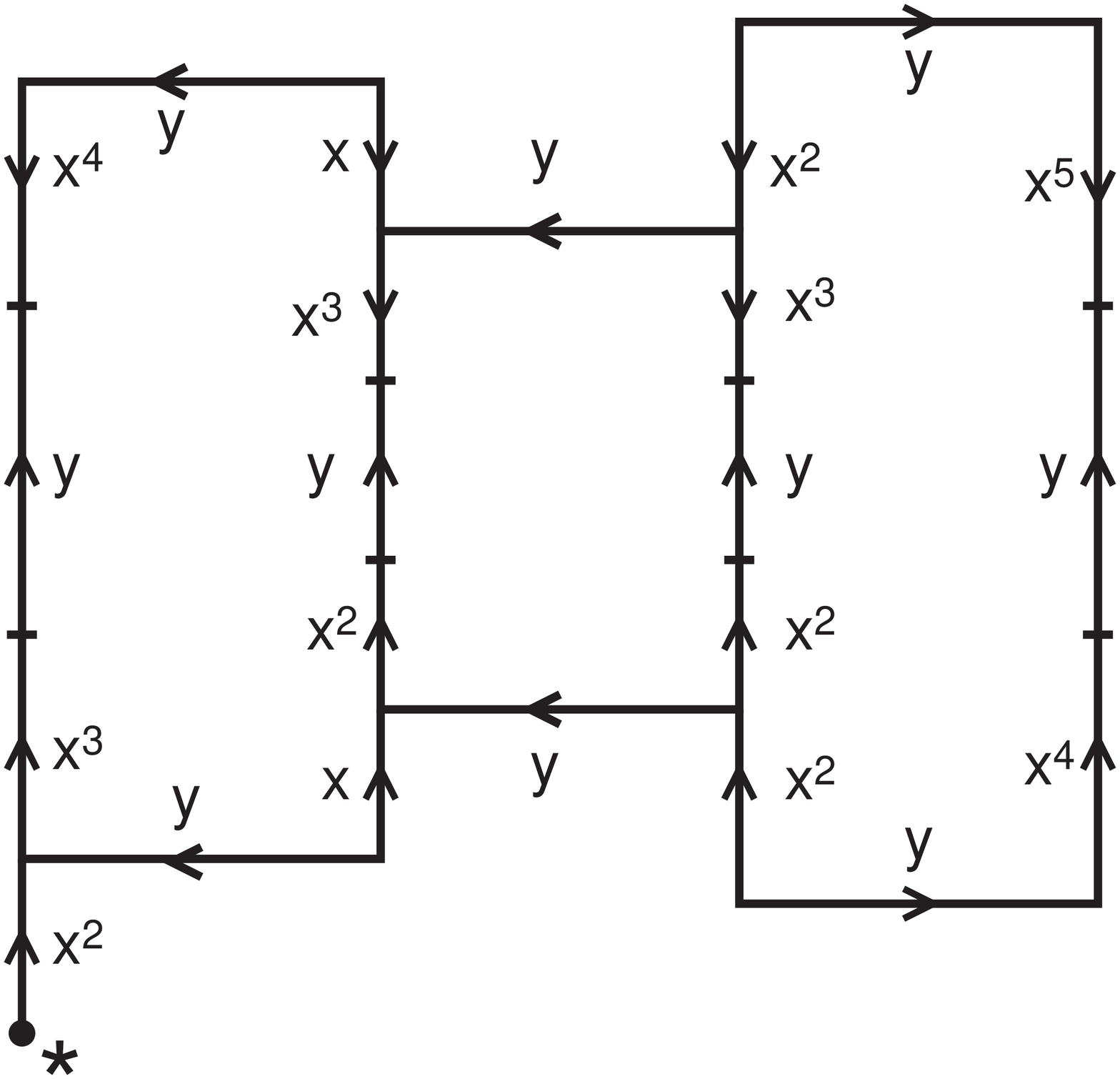}
	\caption{$\Delta_z'$ with $\gamma = x^2y^{-1}xy^{-1}x^{-2}yx^4$}
	\label{f:deltaprime}
\end{figure}

Next we build an iterative procedure for filling a rectangular
2-cell labeled by a relator of the presentation
$\mathcal{P}'$ with a van Kampen diagram
over the presentation $\mathcal{P}$.  Any rectangular box 
labeled by the relator $[y,x^{i}yx^{-i-1}]$ with $i \in \{1,2\}$
of $\mathcal{P}'$ is also a 2-cell labeled by a relator of $\mathcal{P}$,
and so is left unchanged.  For any $i \ge 3$, the rectangular box
labeled by the relator $[y,x^{i}yx^{-i-1}]$ can be filled using
two 2-cells with boundary labels $[y,xyx^{-2}]$, two copies
of the van Kampen diagram over $\mathcal{P}$ for
the word $[y,x^{i-2}yx^{-(i-2)-1}]$, and one copy of the
van Kampen diagram over $\mathcal{P}$ for
the word $[y,x^{i-1}yx^{-(i-1)-1}]$;  see Figure~\ref{f:boxj}
for an illustration of the resulting van Kampen
diagram over $\mathcal{P}$ for
the word $[y,x^{i}yx^{-i-1}]$.
\begin{figure}[!ht]
\includegraphics[width=3in,height=2in]{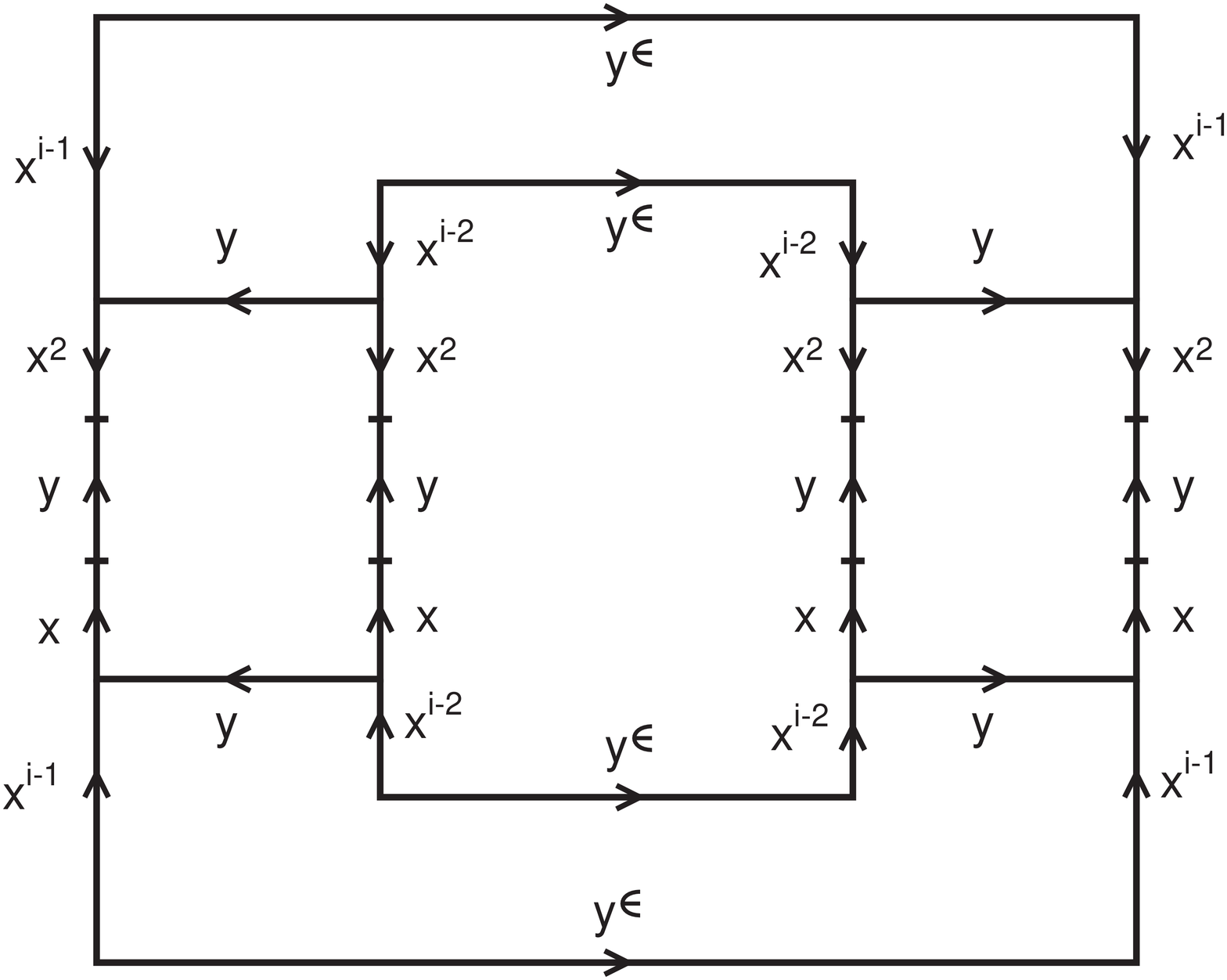}
	\caption{Van Kampen diagram for the relator $[y,x^{i}yx^{-i-1}]$ with $i \ge 3$}
	\label{f:boxj}
\end{figure}

Finally, the van Kampen diagram $\Delta_z$ for the
word $z=\gamma y (\gamma')^{-1}$ is obtained from the diagram
$\Delta_z'$ by replacing each 2-cell with the van Kampen diagram
over the presentation $\mathcal{P}$ built by this iterative procedure.
We refer to the diagram $\Delta_z$ as the {\em filled box diagram for the edge $e$}.

We next count the number of 2-cells in the filled box diagram $\Delta_z$ for $e$.
Note that this box diagram may not be
reduced; that is, there may be adjacent
2-cells in $\Delta_z$ that are labeled by the same relator with
opposite orientations.  Hence we may not have a van Kampen
diagram of minimal possible area (number of 2-cells) 
for the word $z$; we simply have a
diagram that is built in a canonical way.

For all $i \ge 1$, let $C(i)$ denote the number of 2-cells
in the van Kampen diagram over $\mathcal{P}$ for the word $[y,x^{i}yx^{-i-1}]$
built by our iterative procedure.  Then
$C(1)=1$, $C(2) = 1$, and for all $i \ge 3$ we have
$C(i) = C(i-1)+2C(i-2)+2$. That is, the numbers $C(i)$ give
the sequence of positive integers defined in Section~\ref{subsec:weight}. 
Now the number of 2-cells in the filled box diagram $\Delta_z$ associated
to the edge $e$ is the sum of the numbers of 2-cells from presentation
$\mathcal{P}$ that are used to
fill the rectangles in the diagram $\Delta_z'$; that is,
the number of 2-cells in the filled box diagram $\Delta_z$ is
$$
\sum_{k=0}^{m-1} C(s_k),
$$
which is our definition of the weight of $e$
in Section~\ref{subsec:weight}.


\section{Thompson's group $F$ is autostackable}\label{sec:proof}


In this final section we prove that Thompson's group $F$ is autostackable.
As in earlier sections, let 
$\Gamma$ be the Cayley graph of Thompson's group $F$ with respect
to the generating set $A=\{x^{\pm 1},y^{\pm 1}\}$ and let $T$ be the tree 
associated to the set $\Nfgs$ of normal forms from Guba and Sapir's
rewriting system $\Sigma$ (described in Section~\ref{subsec:gsrs}). 

We define a flow function 
$\Phi\colon \vec E\to\vec P$ as follows.
Let $e$ be a directed edge of $\Gamma$.
If $e$ lies on $T$, then $\Phi(e):=e$.
Otherwise, by Lemma~\ref{lem:yyinv1},
we have $e=e_{\gamma,y^b}$ with 
\[
 \gamma \equalword  py^{\epsilon}x^{i}\in\Nfgs,
\]
where $b,\epsilon\in\{1,-1\}$, and $\gamma y^{b}\notin\Nfgs$.
In this case, we define $\Phi(e)$ to be the directed path starting
at $e_-$ labeled by
\[\lab(\Phi(e)) := 
\begin{cases}
x^{-1}y^{-1}xyx^{-2}yx^2 & \text{ if } b=1 \text{ and } i > 2 \\
x^{-i}y^{-\epsilon}x^{i}yx^{-i-1}y^{\epsilon}x^{i+1} & 
                   \text{ if } b=1 \text{ and } 1 \le i \le 2 \\
x^{-2}y^{-1}x^2y^{-1}x^{-1}yx & \text{ if } b=-1 \text{ and } i > 3 \\
x^{-i}y^{-\epsilon}x^{i}y^{-1}x^{-i+1}y^{\epsilon}x^{i-1} & 
               \text{ if } b=-1 \text{ and } 2 \le i \le 3. \\
\end{cases}
\]





\noindent Note that the image of every directed edge $e$ of $\Gamma$ is a 
directed path from $e_-$ to $e_+$.  Hence the property of
fixing the tree in the definition of
flow function holds for the map $\Phi$.  

Next we show that the termination property holds for $\Phi$, in order to complete
the proof that $\Phi$ is a flow function.  
To do this, we use the well-founded strict partial order $\prec$ defined in Section~\ref{sec:order}.  
In particular, it suffices to prove the claim:
\begin{itemize}
\item[($*$)]\label{claimf3} {\em For every pair of directed edges 
$e,e'$ in $\Gamma$ such that $e$ is not in $T$ and
$e'$ is in the path $\Phi(e)$, either $e'$ lies in the tree $T$, or
$e' \prec e$.}
\end{itemize}

To begin the proof of this claim,
we note that for each edge $e$ we have $\Phi(e^{-1}) = \Phi(e)^{-1}$, and 
every edge has the same weight as its inverse.
Hence it suffices to prove this claim for any edge $e$ (not in $T$)
of the form $e=e_{\gamma,y}$. 

Let $g$ be the element of $F$ with $g=_F e_-$, and write
the normal form $\gamma :=\nf{g}$ 
(using Lemma~\ref{lem:yyinv1}) in the form 
\[ 
 \gamma = x^{i_n}y^{\epsilon_n} \dots x^{i_1}y^{\epsilon_1}x^{i_0} = 
      py^{\epsilon_1}x^{i_0},
\]
where each $\epsilon_j \in \{1,-1\}$, $n \geq 1$, and $i_0 \geq 1$. 
Let $g' =_F e_+$, and (again using Lemma~\ref{lem:yyinv1}) 
write the normal form $\gamma':=\nf{g'}$ in the form
\[ 
 \gamma' = x^{j_{n'}}y^{\varepsilon_{n'}} \dots x^{j_1}y^{\varepsilon_1}x^{j_0} = 
      p'y^{\varepsilon_1}x^{j_0},
\]
where each $\varepsilon_j \in \{1,-1\}$, $n' \geq 1$, and $j_0 \geq 2$.

Note also that Lemma~\ref{lem:yyinv1} says that all edges in 
the path $\Phi(e)$ labeled $x$ or $x^{-1}$ are in the tree $T$.
We consider the three other edges in $\Phi(e)$ labeled $y$ or $y^{-1}$,
in two cases.



\medskip

\noindent{\bf Case I.} {\em Suppose that $1 \leq i_0 \leq 2$.}


The first $y^{\pm 1}$ edge on $\Phi(e)$,
namely $e_{py^{\epsilon_1},y^{-\epsilon_1}}$, 
is the inverse of an edge on the normal form path in $\Gamma$
starting at $\groupid$ and labeled by $\gamma$,
and hence is in the normal form tree $T$.  

A similar situation occurs for the third $y^{\pm 1}$ edge on $\Phi(e)$:
Since $m(e) \ge 1$, Lemma~\ref{lem:multbyy} shows that the normal
form $\gamma'=p'y^{\varepsilon_1}x^{j_0}$ labeling the path 
in $\Gamma$ from $\groupid$ to $e_+$ satisfies
$j_0=i_0+1$ and $\varepsilon_1=\epsilon_1$.  
Thus the third $y^{\pm 1}$ edge on $\Phi(e)$
is $e_{p',y^{\varepsilon_1}}$, which lies
on the path labeled $\gamma'$ in $T$.

Finally we consider the second $y^{\pm 1}$ edge $e'$ on $\Phi(e)$.
This edge has label $y$
and its initial vertex 
$e'_- =_F px^{i_0}$ has normal form
$x^{i_n}y^{\epsilon_n} \dots y^{\epsilon_2}x^{i_1+i_0}$.
If this edge is not in the tree, then Lemma~\ref{lem:yyinv1} says
that $n \ge 2$ and $i_1+i_0 \ge 1$.  Now the size sequence of $e'$ is
$\sigma(e')=(s_{1},\dots,s_{m-1})$, 
and so $W(e')<W(e)$; hence $e' \prec e$.




\medskip

\noindent{\bf Case II.} {\em Suppose that $i_0 \geq 3$.} 

The function $\Phi$ replaces the edge $e$ by the path from $g$ to $g'$ of length 9 
labeled by $x^{-1}y^{-1}xyx^{-2}yx^2$. For easier discussion, consider 
the diagram in Figure~\ref{f:i3}. In that diagram, all dotted, double-tip edges, 
that is, the edges $e$, $e_1$, $e_2$, and $e_3$, are labeled by $y$ and all 
other edges by $x$. 
\begin{figure}[!ht]
	\[
	\xymatrix{
		&& 
		\bullet \ar@{->}[r] \ar@{..>>}^{e_1}[d] \ar@{}|<<{f}[l] & 
		\bullet \ar@{..>>}^{e_2}[r] \ar@{}|<<{h}[d] & 
		\bullet \ar@{<-}[r]	& \bullet \ar@{<-}[r] & \bullet \ar@{..>>}^{e_3}[d] && \\
		\bullet \ar@{->}[r] & \bullet \ar@{->}[r] & \bullet \ar@{->}[r] \ar@{}|<<{gx^{-1}}[d] & 	
		\bullet \ar@{..>>}_{e}[r] \ar@{}|<<{g}[d]& 
		\bullet \ar@{<-}[r] \ar@{}|<<{g'}[d] & \bullet \ar@{<-}[r] & \bullet \ar@{<-}[r] \ar@{}|<<{g'x^{-2}}[d] & \bullet \ar@{<-}[r] & \bullet
		\\
		&&&&&&&&
	}
	\]
	\caption{$e$ and $\Phi(e)$ when $i_0 \geq 3$}
	\label{f:i3}
\end{figure}
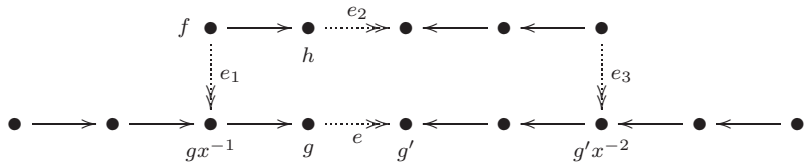
The path $\Phi(e)$ connects $g$ to $g'$ by using the shortest path in the diagram, 
consisting of 9 edges, that does not use the edge $e$ 
(thus, it uses the edges $e_1^{-1}$, $e_2$, and  $e_3$, along with 
6 $x^{\pm 1}$-edges which (as already noted above) are in the normal form tree $T$). 

\begin{lem}\label{lem:f3i3}
None of the edges $e_1^{-1}$, $e_2$, and  $e_3$ is in the normal form tree, and each has weight strictly smaller than the weight of $e$. 
\end{lem}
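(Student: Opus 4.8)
The plan is to prove Lemma~\ref{lem:f3i3} by explicitly computing the normal forms of the initial vertices of $e_1^{-1}$, $e_2$, and $e_3$, showing each fails the tree criterion of Lemma~\ref{lem:yyinv1}, and then comparing size sequences to $\sigma(e)$ to conclude the weight inequalities. Throughout I work in Case~II, so $i_0 \geq 3$, and I use the labeling in Figure~\ref{f:i3}: reading $\Phi(e)$ as $x^{-1}y^{-1}xyx^{-2}yx^2$ from $g$, the vertex $f$ is $gx^{-1}$, then $e_1$ runs $y$ upward out of $f$, $e_2$ runs $y$ rightward out of $h$, and $e_3$ runs $y$ downward into the vertex $g'x^{-2}$.

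First I would record the normal forms of the relevant vertices directly from $\gamma = x^{i_n}y^{\epsilon_n}\cdots x^{i_1}y^{\epsilon_1}x^{i_0}$ with $i_0 \geq 3$. Since $x^{\pm1}$-edges never change $n$ or the $y$-letters, the vertex $f = gx^{-1}$ has normal form $\nf{f} = x^{i_n}y^{\epsilon_n}\cdots x^{i_1}y^{\epsilon_1}x^{i_0-1}$, with $i_0 - 1 \geq 2$. Then $e_1 = e_{\nf{f},y}$: by Lemma~\ref{lem:yyinv1}(1), since $n \geq 1$ and $i_0 - 1 \geq 1$, the edge $e_1$ (hence $e_1^{-1}$) is not on $T$. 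Crucially, $s(\nf{f})$ differs from $s(\gamma)$ only in that every cumulative exponent drops by $1$; because $i_0 - 1 \geq 2 > 0$ the cutoff $m$ is unchanged and $\sigma(e_1) = (s_0 - 1, s_1 - 1, \dots, s_{m-1} - 1)$. Since each $s_k \geq 1$ (indeed $s_0 = i_0 \geq 3$) and $C$ is nondecreasing with $C(s_k - 1) \leq C(s_k)$, with strict inequality guaranteed at $k=0$ because $C(i_0 - 1) < C(i_0)$ (as $C$ is increasing beyond the flat initial segment $C(1)=C(2)=1$, and $i_0 \geq 3$ forces $s_0 - 1 \geq 2$, $s_0 \geq 3$), I get $W(e_1) < W(e) = W(e_1^{-1})$.

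For $e_3$ I would argue analogously using Lemma~\ref{lem:multbyy} to locate $g'$, then back off by $x^{-2}$. The vertex $g'x^{-2}$ has normal form obtained from $\gamma'$ by decrementing $j_0 = i_0 + 1$ by $2$, giving final $x$-exponent $i_0 - 1 \geq 2$; since $e_3$ is labeled $y$ and points into this vertex, its initial vertex is $g'x^{-2}$ and Lemma~\ref{lem:yyinv1}(1) again shows $e_3 \notin T$ because the trailing $x$-exponent is $\geq 1$. Its size sequence is again $\sigma(e)$ with every entry reduced by $1$ (the $-2$ shift lands the trailing exponent at $i_0 - 1 \geq 2$, preserving $m$), so the same $C$-monotonicity argument yields $W(e_3) < W(e)$. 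The edge $e_2$ is the genuinely new one: its initial vertex $h$ is reached from $g$ by $x^{-1}y^{-1}x$, i.e. $h =_F g x^{-1} y^{-1} x$, so I must rewrite $py^{\epsilon_1}x^{i_0}\cdot x^{-1}y^{-1}x = py^{\epsilon_1}x^{i_0 - 1}y^{-1}x$ to normal form and read off its trailing $x$-exponent and size sequence. Here the new $y^{-1}$ interacts with the prefix via a $y^{-1}$-rule of size $i_0 - 1 - 1 = i_0 - 2$; since $i_0 \geq 3$ this size is $\geq 1$, so exactly one $y^{-1}$-rule fires and the resulting size sequence of $e_2$ turns out to be a \emph{proper subsequence} of $\sigma(e)$ (dropping the $s_0$-term and shifting), again using entries bounded by the corresponding $s_k$.

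The main obstacle — and the step I would spend the most care on — is the analysis of $e_2$, because unlike $e_1^{-1}$ and $e_3$ (which are simple $x^{\pm1}$-translates of $e$ and so inherit $\sigma(e)$ almost verbatim), the vertex $h$ sits after a nontrivial $y^{-1}$-multiplication that can trigger a $y^{-1}$-rule and possibly a $y$-cancellation. I would handle this by applying Lemma~\ref{lem:multbyyy} to $\nf{gx^{-1}}\cdot y^{-1}$ and carefully tracking whether the cancellation case occurs, then verifying via Lemma~\ref{lem:yyinv1} that $e_2 \notin T$ and that $\sigma(e_2)$ has total $C$-weight strictly below $W(e)$ — the strictness coming from the omission of the $C(s_0) = C(i_0)$ term, which is positive. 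The bookkeeping of how the exponents of $h$ relate to $s(\gamma)$, and confirming the cutoff index behaves as claimed, is where the real work lies; everything else reduces to the monotonicity property $C(i) > C(i-1) + C(1)$ for $i \geq 3$ recorded in Section~\ref{subsec:weight}.
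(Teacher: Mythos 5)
Your proposal contains two genuine errors, and the second is fatal to the core of the lemma. First, you misread the geometry of Figure~\ref{f:i3}: the paper sets $f:=gx^{-1}y^{-1}$ (not $gx^{-1}$), so $e_1$ is the $y$-labeled edge from $gx^{-1}y^{-1}$ up to $gx^{-1}$, and the path $\Phi(e)$ traverses $e_1^{-1}$, the $y^{-1}$-edge out of $gx^{-1}$. The edge you actually analyze, $e_{gx^{-1},y}$ (from $gx^{-1}$ up to $gx^{-1}y$), does not lie on $\Phi(e)$ at all, so bounding its weight proves nothing about $W(e_1)$. The correct size sequence, read from $\nf{gx^{-1}}$ via the $y^{-1}$-edge formula, is $\sigma(e_1)=(s_0-2,\dots,s_{m_1-1}-2)$ with $m_1=\min\{k\mid s_k\le 2\}$, not $(s_0-1,\dots,s_{m-1}-1)$. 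The same misreading occurs for $e_3$: the vertex $g'x^{-2}$ is its \emph{terminal} vertex (your own sentence says $e_3$ ``points into'' it and then calls it the initial vertex), so one must work with $e_3^{-1}=e_{g'x^{-2},y^{-1}}$ and apply Lemma~\ref{lem:yyinv1}(2) with threshold $\ge 2$, not part (1) with threshold $\ge 1$. Moreover, your repeated claim that ``the cutoff $m$ is unchanged'' after decrementing the cumulative exponents is false in general: $\min\{k\mid s_k-1\le 0\}$ is strictly smaller than $m$ whenever some $s_k=1$ with $k<m$.

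Second, and more seriously, your treatment of $e_2$ --- which you correctly identify as the crux --- rests on a false claim. When $py^{\epsilon_1}x^{i_0-1}y^{-1}x$ is rewritten to normal form it is not true that ``exactly one $y^{-1}$-rule fires'': after the $y^{-1}$-rule of size $i_0-2$ is applied, the newly created block $x^{i_0-1}y^{-1}$ merges with the $x^{i_1}$ preceding it, producing $y^{\epsilon_2}x^{s_1-1}y^{-1}$, which triggers another $y^{-1}$-rule whenever $s_1\ge 3$, and so on; in general exactly $m_1=\min\{k\mid s_k\le 2\}$ rules fire. Consequently $\sigma(e_2)$ is \emph{not} a proper subsequence of $\sigma(e)$: by the paper's Table~\ref{t:phi}, in the no-$y^{-1}$-cancellation case
\[
\sigma(e_2)=(s_0-1,\dots,s_{m_1-1}-1,\,1,\,s_{m_1},\dots,s_{m-1}),
\]
which is \emph{longer} than $\sigma(e)$ (length $m+1$ versus $m$). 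No subsequence or entrywise-domination argument can succeed here. The paper instead computes $W(e)-W(e_2)$ as a telescoping sum and must invoke the recurrence $C(i)=C(i-1)+2C(i-2)+2$ at $i=s_0\ge 3$ to show that the gain $C(s_0)-C(s_0-1)$ strictly exceeds the extra term $C(1)=1$ contributed by the new entry $1$ in $\sigma(e_2)$. This is precisely why the sequence $C$ is required to satisfy $C(i)>C(i-1)+C(1)$ for $i\ge 3$; your sketch never confronts this point, so the weight inequality $W(e_2)<W(e)$ remains unproven.
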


\begin{proof}
Let $s(\gamma)=(s_n,\dots,s_0)$ be the sequence of partial sums of the $x$-exponents in $\gamma$, and let $m:=m(e)= \min\{k \mid s_k \leq 0  \}$. Denote $f:=gx^{-1}y^{-1}$ and $h:=fx =_F gx^{-1}y^{-1}x$. 
Note that
\[
 \nf{(gx^{-1})} = x^{i_n}y^{\epsilon_n} \dots x^{i_1}y^{\epsilon_1}x^{i_0-1}.
\]

Since $i_0 - 1 \geq 2$, the edge $e_1^{-1}$ is not on the normal form tree. But this means that the normal form of $f$ ends with at least one $x$ (in fact, exactly $i_0-2$ according
to Lemma~\ref{lem:multbyyy}) and the normal form of $h$ ends with at least two (in fact $i_0-1$), which implies that $e_2$ is not on the normal from tree. The normal form of $g'$ ends with at least 4 letters $x$ (Lemma~\ref{lem:multbyy} says $g'$ ends with exactly $i_0+1$), which implies that $g'x^{-2}$ ends with at least 2 (exactly $i_0-1$), and so $e_3$ is not on the normal form tree either. 

We have 
\[
m_1 := m(e_1^{-1}) = m'(\nf{gx^{-1}}) 
= \min\{k \mid i_k+\dots+(i_0-1) \leq 1 \} = \min\{k \mid s_k \leq 2 \}
\]
when the set is nonempty and, otherwise, $m_1=n$. Hence $m_1 \leq m$. 
Now the vector $s(\nf{gx^{-1}})$ of cumulative $x$-exponents
is $s(\nf{gx^{-1}})=(s_{n}-1,...,s_{0}-1)$, and so 
Lemmas~\ref{lem:multbyyy} and~\ref{l:same} say that the sequence 
of rule sizes associated to the edge $e_1$ is
$\sigma(e_1)=\sigma(e_1^{-1})=(s_0-2,...,s_{m_1-1}-2)$.
Now $s_0=i_0 \ge 3$ (since we are in Case~II),
and so $m_1 \ge 1$; moreover,
for each $k=0,...,m_1-1$ we have $s_k \ge 3$.
Using the fact that $C$ is a nondecreasing
sequence of positive integers (for the inequalities) 
and the recurrence relation defining $C$ (in the last equality)  yields
\begin{eqnarray*}
W(e) -W(e_1) &=& \sum_{k=0}^{m-1} C(s_k) - \sum_{\ell=0}^{m_1-1} C(s_\ell-2)
= \sum_{k=0}^{m_1-1} [C(s_k) - C(s_k-2)] + \sum_{k=m_1}^{m-1} C(s_k)\\
&\ge&    C(s_0) - C(s_0-2) = C(s_0-1)+C(s_0-2) + 2 > 0.
\end{eqnarray*}
Therefore $W(e) > W(e_1)$.

By using Table~\ref{t:exponentsyy} we can calculate the cumulative $x$-exponents vector $s(f)$ directly from the cumulative $x$-exponents vector $s(gx^{-1})$, and from there we compute 
the cumulative $x$-exponents vector $s(h)$ 
simply by adding 1 in each coordinate of $s(f)$. Partial results are given in Table~\ref{t:phi} (with the no $y^{-1}$-cancellation case on the left 
and the $y^{-1}$-cancellation case on the right); 
the cancellation cases in that table refer to crossing the edge $e_1^{-1}$, that is multiplying $gx^{-1}$ by $y^{-1}$. 
\begin{table}[!ht]
\begin{tabular}{cc}
	$
	\begin{array}{|c|c|c|c|}
	\hline 
	k & s(f)_k & s(h)_k & s(g)_k \\
	\hline 
	0  & s_0-2    & s_0-1 & s_0 \\
	1  & s_1-2    & s_1-1 & s_1 \\
	\vdots & \vdots & \vdots & \vdots \\
	m_1-1 & s_{m_1-1}-2 & s_{m_1-1}-1 & s_{m_1-1}\\
	m_1   & \mathbf{0} & 1 & s_{m_1} \\
	m_1+1 & s_{m_1}-1  & s_{m_1} & s_{m_1+1} \\
	m_1+2 & s_{m_1+1}-1 & s_{m_1+1} & s_{m_1+2} \\
	\vdots & \vdots & \vdots & \vdots \\
	\vdots & \vdots & \vdots & \vdots \\
	m   & s_{m-1}-1 & s_{m-1} & \mathbf{s_{m}}\\
	m+1 & s_{m}-1 & \mathbf{s_{m}} & \\	
	\hline
	\end{array}
	$
	&
	$
	\begin{array}{|c|c|c|c|}
	\hline 
	k & s(f)_k & s(h)_k & s(g)_k\\
	\hline 
	0  & s_0-2    & s_0-1 & s_0\\
	1  & s_1-2      & s_1-1 & s_1 \\
	\vdots & \vdots & \vdots & \vdots \\
	m_1-1 & s_{m_1-1}-2 & s_{m_1-1}-1 & s_{m_1-1} \\
	m_1   & s_{m_1+1} -1 & s_{m_1+1} & s_{m_1} \\
	m_1+1 & s_{m_1+2}-1 & s_{m_1+2} & s_{m_1+1}\\
	\vdots & \vdots & \vdots & \vdots \\
	m-2 & s_{m-1}-1 & s_{m-1} & s_{m-2}\\	
	m-1 & s_{m}-1 & \mathbf{s_{m}} & s_{m-1}\\
	m   &         &       & \mathbf{s_{m}}\\	
	\hline
	\end{array}
	$
\end{tabular}
\caption{Cumulative $x$-exponents of $f$, $h$, and $g$: the no $y^{-1}$-cancellation case  (left) and the $y^{-1}$-cancellation case (right)}
\label{t:phi}
\end{table}

We claim that, in each $s(h)_k$ and $s(g)_k$ column in Table~\ref{t:phi}, the boldface entry is the first nonpositive entry.
For $g$ this is immediate from the definition of the cutoff point $m(g)=m$.
For $k=0,\dots,m_1-1$, by the definition of $m_1$, we have $s_k \geq 3$, which implies that $s(h)_k=s_k-1 \geq 2$. By the definition of $m$, we also have that $s_k \geq 1$ for $k \leq m-1$ and this completes our claim. Note that we just proved that $m(h)=m+1$ in the no $y^{-1}$ cancellation case, and $m(h)=m-1$ in the $y^{-1}$-cancellation case. 

Since the cumulative $x$-exponents of $g$ and $h$ before the index $m=m(g)$ 
and $m(h)$, respectively, represent the rule sizes associated to the 
edges $e$ and $e_2$, 
we again use them to compare weights.  In particular, the difference
in weights $W(e)-W(e_2)$ is the difference between the sum of the 
entries in the sequence $C$ (defined in Section~\ref{subsec:weight})
with indices in the $s(g)_k$ column before $m(g)$
and the sum of the $C$ sequence
entries with indices in the $s(h)_k$ column before $m(h)$;  
hence entries appearing in both columns will cancel.
In more detail, in
the no $y^{-1}$ cancellation case we have
\begin{eqnarray*}
 W(e) - W(e_2) &=& \sum_{k=0}^{m-1} C(s_k) - \sum_{\ell=0}^{m(h)-1} C(s(h)_\ell)\\
&=& \sum_{k=0}^{m_1-1} [C(s_k)-C(s_k-1)] + C(s_{m_1}) -C(1) +
\sum_{k=m_1+1}^{m-1} [C(s_k) - C(s_{k-1})] - C(s_{m-1})\\
&=& \sum_{k=0}^{m_1-1} [C(s_k)-C(s_k-1)] -1 \\
&\ge& C(s_0)-C(s_0-1)-1 =  2C(s_0-2) +2 - 1 > 0
\end{eqnarray*}
where in the inequalities we used that $C$ is a nondecreasing sequence
of positive integers, and between the two inequalities we apply
the recurrence relation defining the sequence $C$. 
A similar telescoping sum occurs in the $y^{-1}$ cancellation case, yielding
\[
W(e) - W(e_2) = \sum_{k=0}^{m_1-1} [C(s_k)-C(s_k-1)] + C(s_{m_1})
\ge C(s_0)-C(s_0-1) >0.
\]
Therefore $W(e)>W(e_2)$.


Finally, since $i_0 \ge 3$, Lemma~\ref{lem:multbyy} says that
the normal form $\nf{g'}$ has a suffix $x^{j_0}$ with $j_0 \ge 4$,
and so the normal form $\nf{g'x^{-2}}$ is obtained 
by removing an $x^2$ suffix from $\nf{g'}$. 
Hence $s(g'x^{-2})_k=s(g')_k-2$ for all $k$, and so $m(e_3) \le m(e)$.
Then the size sequence $\sigma(e)$ contains at least as many entries as $\sigma(e_3)$,
and has larger corresponding entries.
Therefore $W(e)>W(e_3)$. 

This completes the proof of Case~II and Lemma~\ref{lem:f3i3}.
\end{proof}


\medskip

This also completes the proof of the claim ($*$) (on page~\pageref{claimf3}).
Therefore the function $\Phi$ satisfies the termination property and is a flow function
for Thompson's group $F$.  This flow function is bounded, with bounding constant
$K=13$.

It remains to show that the set $\Graph{\Phi}$ is a regular language.
For the flow function $\Phi$, this graph is
\begin{eqnarray*}
\Graph{\Phi} &=&  
\cup_{a \in A} \left\{(u,  a,  a) \mid ua \in \Nfgs \text{ or } u \in (\Nfgs \cap A^*a^{-1})\right\}  \\
& & \cup \left(\cup_{\epsilon\in\{1,-1\}} \cup_{i \in \{1,2\}}
\left\{(uy^\epsilon x^i,  y,  x^{-i}y^{-\epsilon}x^{i}yx^{-i-1}y^{\epsilon}x^{i+1})
   \mid uy^{\epsilon}x^i \in \Nfgs\right\} \right) \\
& & \cup \left(\cup_{\epsilon\in\{1,-1\}} \cup_{i \in \{2,3\}}
\left\{(uy^\epsilon x^i,  y^{-1},    
    x^{-i}y^{-\epsilon}x^{i}y^{-1}x^{-i+1}y^{\epsilon}x^{i-1}) \mid
  uy^\epsilon x^i \in \Nfgs\right\} \right) \\
& & \cup \left\{(ux,  y,   x^{-1}y^{-1}xyx^{-2}yx^2) \mid
u \in \Nfgs \cap (A^*y^{\pm 1}A^*x^2)\right\}  \\
& & \cup \left\{(ux^2, y^{-1},  x^{-2}y^{-1}x^2y^{-1}x^{-1}yx) \mid
u \in \Nfgs \cap (A^*y^{\pm 1}A^*x^2)\right\}\\
&=&  
\cup_{a \in A} 
\left(\Nfgs_{a} \cup (\Nfgs \cap A^*a^{-1})\right) \times \{a\} \times \{a\} \\
& & \cup \left(\cup_{\epsilon\in\{1,-1\}} \cup_{i \in \{1,2\}}
(\Nfgs \cap {A^*y^{\epsilon}x^i}) \times \{y\} \times 
     \{x^{-i}y^{-\epsilon}x^{i}yx^{-i-1}y^{\epsilon}x^{i+1}\}\right)\\
& & \cup \left(\cup_{\epsilon\in\{1,-1\}} \cup_{i \in \{2,3\}}
(\Nfgs \cap {A^*y^\epsilon x^i}) \times \{y^{-1}\} \times    
    \{x^{-i}y^{-\epsilon}x^{i}y^{-1}x^{-i+1}y^{\epsilon}x^{i-1}\}\right) \\
& & \cup \left(\Nfgs \cap (A^*y^{\pm 1}A^*x^3)\right)  
  \times \{y\} \times  \{x^{-1}y^{-1}xyx^{-2}yx^2\}  \\
& & \cup \left(\Nfgs \cap (A^*y^{\pm 1}A^*x^4)\right) \times
  \{ y^{-1}\} \times   \{x^{-2}y^{-1}x^2y^{-1}x^{-1}yx \}.
\end{eqnarray*}

Since the language $\Nfgs$ is regular, Lemma~\ref{lem:regclosure}
says that the quotient language $\Nfgs_{a}$ 
is also regular.  Now closure of the class
of regular languages under union, intersection, concatenation, etc., shows that
the left factor in each triple is a regular language.  Since any
set consisting of a single word is also regular,  Lemma~\ref{lem:regclosure}
shows that each of the above products of three regular languages is also regular.
Finally, closure under union again shows that $\Graph{\Phi}$ is regular.

Therefore $\Phi$ is a regular bounded flow function, and so Thompson's
group $F$ is autostackable.  The prefix-rewriting system in the statement
of Theorem~\ref{thm:fisautostk} is the system associated to the
flow function $\Phi$, and hence is convergent.  This completes
the proof of Theorem~\ref{thm:fisautostk}.


\section*{Acknowledgments}


The third author was partially supported by grants from
the National Science Foundation (DMS-1313559) and
the Simons Foundation (Collaboration Grant number 581433).

\end{document}